\DeclareMathAlphabet{\mathpzc}{OT1}{pzc}{m}{it}
\newtheorem{thm}{Theorem}[section]
\newtheorem{lem}[thm]{Lemma}
\newtheorem{prop}[thm]{Proposition}
\newtheorem{cor}[thm]{Corollary}
\theoremstyle{definition}
\newtheorem{defn}[thm]{Definition}
\newtheorem{ex}[thm]{Example}
\theoremstyle{remark}
\newtheorem{rem}[thm]{Remark}
\newcommand{\slk}{\mathfrak{sl}}
\newcommand{\ok}{\mathfrak{o}}
\newcommand{\g}{\mathfrak{g}}
\newcommand{\hk}{\mathfrak{h}}
\newcommand{\ak}{\mathfrak{a}}
\newcommand{\qk}{\mathfrak{q}}
\newcommand\Bc{\mathcal{B}}
\newcommand\FF{\mathbb F}
\newcommand{\Zs}{\mathscr Z}
\newcommand{\ad}{\operatorname{ad}}
\newcommand\id{\textrm{id}}
\renewcommand\hat\widehat
\renewcommand\tilde\widetilde 
\newcommand{\spa}{\operatorname{span}}
\newcommand{\OO}{\operatorname{O}}
\newcommand{\oplusp}{{ \ \overset{\perp}{\mathop{\oplus}} \ }}
\begin{document}

\date{\today}

\title[Solvable quadratic Lie algebras of dimensions $\leq 8$]{Solvable quadratic Lie algebras of dimensions $\leq 8$}

\author{Minh Thanh Duong}
\author{Rosane Ushirobira}

\address{Minh Thanh Duong, Department of Physics, Ho Chi Minh city
  University of Pedagogy, 280 An Duong Vuong, Ho Chi Minh city,
  Vietnam.}  \address{Non-A team, Inria, France \& Institut de
  Math\'ematiques de Bourgogne, Universit\'e de Bourgogne, B.P. 47870,
  F-21078 Dijon Cedex} \email{thanhdmi@hcmup.edu.vn}
\email{Rosane.Ushirobira@inria.fr}

\keywords{Quadratic Lie algebras, Solvable, Double extension, Classification.}

\subjclass[2010]{15A21, 15A63, 17B05, 17B30}

\date{\today}
\maketitle
\begin{abstract}
  In this paper, we classify solvable Lie algebras of dimensions $\leq
  8$ endowed with a nondegenerate invariant symmetric bilinear form
  over an algebraically closed field. This classification (up to
  isometrically isomorphisms) is mainly based on the double extension method.
\end{abstract}

\section{Introduction}
Let us consider $\g$ a quadratic Lie algebra, that is, $\g$ is
equipped with a nondegenerate invariant symmetric bilinear form $B$. A
well-known problem in the theory of quadratic Lie algebras over an
algebraically closed field $\FF$ is to establish their
classification. This problem was intensively studied in many
works. For instance, it is initiated for nilpotent quadratic Lie
algebras of dimension $\leq$ 7 in \cite{FS87} and more recently, in
\cite{Kat07} the case of real nilpotent quadratic Lie algebras of
dimension $\leq$ 10 was examined. For the nonsolvable case, real
quadratic Lie algebras up to dimension 9 were analyzed in \cite{CS08}
and up to dimension 13 in a recent work \cite{BE14}. However, the
classification of {\bf solvable nonnilpotent} quadratic Lie algebras
is not trivial and it is only known in dimensions $\leq$ 6
\cite{BK03}. This was our motivation to attempt such a classification
for higher-dimensional Lie algebras. With this aim, in this work we
provide a classification of solvable quadratic Lie algebras of
dimensions $\leq 8$.%% In this paper, as frequently for quadratic Lie
%%algebras, isomorphisms mean also isometries with respect to the
%%invariant bilinear form.
 The main method used here is the double
extension procedure defined in \cite{MR85}. %% The isomorphism
%% of obtained Lie algebras is checked by hand. 
	
%% In this paper, a quadratic Lie algebra means a finite-dimensional Lie
%% algebra equipped with a non-degenerate invariant symmetric bilinear
%% form and an isomorphism of quadratic Lie algebras is a Lie algebra
%% isomorphism in addition an isometry with respect to the invariant
%% bilinear form.

The paper is organized as follows. In Section 1, we recall basis
definitions and give some useful results. We also present the
classification of solvable quadratic Lie algebras of dimensions $\leq
6$. In Section 2, the classification of solvable quadratic Lie
algebras of dimension 7 is obtained, by using the double extension of
5-dimensional solvable quadratic Lie algebras. Section 3 is dedicated
to the case when dimension is 8, by applying the double extension
procedure for 6-dimensional solvable quadratic Lie algebras.

\section{Quadratic Lie algebras of dimensions $\leq 6$}

\begin{defn} 
A Lie algebra $\g$ is called a {\em quadratic Lie algebra} if it is
endowed with a nondegenerate invariant (i.e. $B([X,Y],Z) =
B(X,[Y,Z])$ for all $X,\ Y,\ Z\in\g$) symmetric bilinear form $B$.
\end{defn}

It is easy to check that if $I$ is an ideal of $\g$ then $I^\bot$ is
also an ideal of $\g$. Moreover, if $I$ is nondegenerate (i.e. $B|_{I
  \times I}$ is nondegenerate), so is $I^\bot$ and $\g = I\oplus
I^\bot$. In this case, we use the convenient notation $\g = I\oplusp
I^\bot$. This implies that in order to study quadratic Lie algebras
one may focus on {\em indecomposable} ones. Recall that a quadratic
Lie algebra $\g$ is called indecomposable if it does not contain any
proper ideal which is nondegenerate. %if $\g = \g_1 \oplusp \g_2$,
%% with $\g_1$ and $\g_2$ ideals of $\g$, implies $\g_1$ or $\g_2 =
%% \{0\}$. 
Otherwise, we call $\g$ {\em decomposable}.
	
Clearly, if $\g$ has a nonzero central element $X$ that is not
isotropic (i.e. $B(X,X)\neq 0$), then $\g$ is decomposable. A vector
subspace $V$ of $\g$ is called {\em totally isotropic} if $B(X,Y)=0$
for all $X,\ Y\in V$. Thus, for indecomposable quadratic Lie algebras,
the center $\Zs(\g)$ needs to be totally isotropic, or equivalently
$\Zs(\g)\subset[\g,\g]$ (we call such quadratic Lie algebras {\em reduced}).
	
\begin{defn}
Two quadratic Lie algebras $(\g,B)$ and $(\g',B')$ are {\em
  isometrically isomorphic} (or {\em i-isomorphic}, for short) if
there exists a Lie algebra isomorphism $A$ from $\g$ onto $\g'$
satisfying $B'(A(X), A(Y)) = B(X,Y)$ for all $X,\ Y \in \g$. In this
case, $A$ is called an {\em i-isomorphism}.
\end{defn}

%%{\bf In this paper, by an isomorphism we mean an i-isomorphim.}

%For quadratic Lie algebras, the classification is frequently considered in the terms of isometric isomorphism. We say that two quadratic Lie algebras $(\g,B)$ and $(\g',B')$ are isometrically isomorphic (or  i-isomorphic, for short) if there exists a Lie algebra isomorphism
  %$A$ from $\g$ onto $\g'$ satisfying $B'(A(X), A(Y)) = B(X,Y)$ for
  %all $X,\ Y \in \g$.  In this case, $A$ is called an
  %i-isomorphism. %Note that two isomorphic quadratic Lie algebras are
  %not necessarily isomorphic (see an example in \cite{DPU12}).

\begin{defn}\label{defn1.2}
Let $(\hk,[\cdot, \cdot]_\hk,B)$ be a quadratic Lie algebra and $D$ a
{\em skew-symmetric} derivation of $\hk$ (i.e. $D$ satisfies
$B(D(X),Y) = -B(X,D(Y))$ for all $X,\ Y\in\hk$). We define on the
vector space $\g=\hk\oplus \FF e\oplus \FF f$ the product:
\[[X,Y]= [X,Y]_{\hk} +B(D(X),Y)f, \;  [e,X]=D(X), \ \forall 
X,\ Y\in\hk \ \text{ and } \; [f,\g]=0.\] Then $\g$ is a quadratic Lie
algebra with invariant bilinear form $B_{\g}$ defined by:
$$B_{\g}(e,e)=B_{\g}(f,f)=B_{\g}(e,\hk)=B_{\g}(f,\hk)=0, \ B_{\g}(X,Y)=B(X,Y)
$$ and $B_{\g}(e,f)=1$ for all $X,\ Y\in\g$. We call $\g$ the {\em
  double extension of $\hk$ by means of $D$} or a {\em one-dimensional
  double extension of $\hk$}, for short. We may use the notation $(\g,
B_\g,D)$.
\end{defn}

Double extensions are a very useful procedure to obtain quadratic Lie
algebras from given ones and they appear frequently in the
classification problem. In the above Definition, if $\hk$ is Abelian and $D\neq 0$
then it is obvious that $\left[[\g,\g],[\g,\g]\right]$ is at most
one-dimensional. In this case, $\g$ is called a {\em 1-step double
  extension} or a {\em singular} quadratic Lie algebra as described in
\cite{DPU12}. All 1-step double extensions were completely classified
in \cite{DPU12} and their relation to the classification of
$\OO(n)$-adjoint orbits in $\ok(n)$ was established. Moreover, a
remarkable result in \cite{FS87} asserts that one-dimensional double
extensions are sufficient for studying solvable quadratic Lie
algebras.

Next, we provide some classification criteria that are useful for our
work.
\begin{lem}\label{lem1}
Let $(\g_1,B_{\g_1}, D_1)$ and $(\g_2,B_{\g_2},D_2)$ be double
extensions of $(\hk,B)$ by means of $D_1$ and $D_2$ respectively. We write $\g_1=\hk\oplus\FF e\oplus \FF f$ and $\g_2=\hk\oplus\FF
e'\oplus \FF f'$ as double extensions. Then there is an i-isomorphism $A:\g_1\rightarrow \g_2$ satisfying $A(\FF f)=\FF f'$ if and only if there exist an i-isomorphism
$P:\hk\rightarrow\hk$, a nonzero $\lambda\in\FF$ and an element
$X\in\hk$ such that $P^{-1}D_1P=\lambda D_2 +\ad(X)$.
\end{lem}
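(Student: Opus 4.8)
The plan is to prove both implications by pinning down, via the invariant forms, the only possible shape of an isometric isomorphism $A\colon\g_1\to\g_2$ with $A(\FF f)=\FF f'$, and then matching structure constants.

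\emph{Forward direction.} Suppose $A\colon\g_1\to\g_2$ is an isomorphism with $A(\FF f)=\FF f'$. The key preliminary observation is that $\hk\oplus\FF f$ is intrinsic: since $f$ is isotropic and orthogonal to $\hk$, one has $\hk\oplus\FF f\subseteq(\FF f)^{\perp}$, and both subspaces have codimension $1$ by nondegeneracy of $B_{\g_1}$, whence $\hk\oplus\FF f=(\FF f)^{\perp}$; likewise $\hk\oplus\FF f'=(\FF f')^{\perp}$ in $\g_2$. An isometry sending $\FF f$ to $\FF f'$ sends $(\FF f)^\perp$ to $(\FF f')^\perp$, so $A(\hk\oplus\FF f)=\hk\oplus\FF f'$, and I may write
\[
A(f)=\mu f',\qquad A(Y)=P(Y)+\varphi(Y)f'\ \ (Y\in\hk),\qquad A(e)=\alpha e'+Z+\beta f'
\]
with $P\colon\hk\to\hk$ and $\varphi\colon\hk\to\FF$ linear, $Z\in\hk$, and $\mu,\alpha,\beta\in\FF$. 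Then I would read off, in order: $B_{\g_2}(A(f),A(e))=B_{\g_1}(f,e)=1$ forces $\alpha\mu=1$, so $\alpha\neq0$ and $A$ (hence $P$) is bijective; $B_{\g_2}(A(Y),A(Y'))=B(Y,Y')$ forces $P$ to be an isometry of $(\hk,B)$; the $\hk$-component of $A([Y,Y']_{\g_1})=[A(Y),A(Y')]_{\g_2}$ forces $P$ to be a Lie automorphism of $\hk$; and finally, since $[e,Y]_{\g_1}=D_1(Y)$, the $\hk$-component of $A(D_1(Y))=[A(e),A(Y)]_{\g_2}$ gives $P\circ D_1=\alpha\,D_2\circ P+\ad(Z)\circ P$, i.e. $PD_1P^{-1}=\alpha D_2+\ad(Z)$. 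Putting $\bar P:=P^{-1}$ (again an isometric Lie automorphism), $\lambda:=\alpha\neq0$ and $X:=Z$, this reads $\bar P^{-1}D_1\bar P=\lambda D_2+\ad(X)$, as wanted.

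\emph{Converse.} Conversely, given an isometric Lie automorphism $P$ of $\hk$, a nonzero $\lambda$ and $X\in\hk$ with $P^{-1}D_1P=\lambda D_2+\ad(X)$, I would run the computation above backwards: set $Q:=P^{-1}$ (so that $QD_1Q^{-1}=\lambda D_2+\ad(X)$) and define $A\colon\g_1\to\g_2$ by
\[
A(f)=\tfrac1\lambda f',\qquad A(e)=\lambda e'+X-\tfrac1{2\lambda}B(X,X)f',\qquad A(Y)=Q(Y)-\tfrac1\lambda B(X,Q(Y))f'\ \ (Y\in\hk),
\]
the coefficients being exactly those forced in the forward direction (with $\alpha=\lambda$, $\mu=\lambda^{-1}$, $Z=X$, and the $f'$-coefficient of $A(e)$ chosen to make $B_{\g_2}(A(e),A(e))=0$). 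One then checks in turn that $A$ is a linear bijection with $A(\FF f)=\FF f'$ — in the orderings $\g_1=\FF e\oplus\hk\oplus\FF f$ and $\g_2=\FF e'\oplus\hk\oplus\FF f'$ its matrix is lower-triangular with invertible diagonal blocks $\lambda$, $Q$, $\lambda^{-1}$ — that $A$ is an isometry (a short computation from the prescribed values of $B_{\g_1}$ and $B_{\g_2}$), and that $A$ is a Lie homomorphism: on $[f,\cdot]$ and $[e,f]$ this is trivial; on $[e,Y]_{\g_1}=D_1(Y)$ it is exactly the hypothesis $QD_1Q^{-1}=\lambda D_2+\ad(X)$; and on $[Y,Y']_{\g_1}$ it follows from the double-extension formula, the invariance of $B$ on $\hk$, and $Q$ being an isometry and a Lie automorphism.

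\emph{Main obstacle.} I expect essentially all the genuine work to be in that last verification — the compatibility of $A$ with the brackets $[e,Y]$ and $[Y,Y']$ — and specifically in matching their scalar (i.e. $f'$-) components; that is where the hypothesis $P^{-1}D_1P=\lambda D_2+\ad(X)$, the invariance of $B$, and the skew-symmetry of $D_1,D_2$ have to be combined, and where the precise choices of $\varphi$ and of the $f'$-coefficient of $A(e)$ are needed. A minor but persistent point to watch is the side on which $P$ is conjugated: this accounts for the passage from $P$ to $Q=P^{-1}$ in the formulas for $A$ (and, equivalently, for the symmetry of the relation $PD_1P^{-1}=\lambda D_2+\ad(Z)$ under swapping $D_1\leftrightarrow D_2$). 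Everything else is routine bookkeeping with the two sets of structure constants.
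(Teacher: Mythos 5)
Your proposal is correct and follows essentially the same route as the paper's proof: decompose $A$ using $(\FF f)^{\perp}=\hk\oplus\FF f$, read off $\alpha$, the isometry/automorphism property of the $\hk$-block and the $f'$-coefficients from the invariant form, match the $\hk$-component of $A([e,Y])=[A(e),A(Y)]$ to get $PD_1P^{-1}=\lambda D_2+\ad(X)$, and invert the construction for the converse with exactly the formulas the paper uses. The only differences are notational (your $P,\bar P$ versus the paper's $A_0,P=A_0^{-1}$).
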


\begin{proof}
Let $A:\g_1\rightarrow \g_2$ be an i-isomorphism satisfying $A(\FF f)=\FF f'$ then there is a nonzero $\alpha\in\FF$ such that $A(f)=\alpha f'$. Since $B_{\g_2}(A(\hk),A(f))=0$, one has $A(\hk)\subset f'^\bot=\hk\oplus\FF f'$. Hence, 
%We write $\g_1=\hk\oplus\FF e\oplus \FF f$ and $\g_2=\hk\oplus\FF
%e'\oplus \FF f'$ as double extensions. %% Since $D_1$ and $D_2$ are
%% invertible
%Then $[\g_1,\g_1]=\hk\oplus \FF f$ and $[\g_2,\g_2]=\hk\oplus \FF
%f'$%% \fixmeRU{I don't think we need
  %% invertible derivations here} \fixmeTD{Actually, this is a special
  %% case of Proposition 2.11 in \cite{FS87} where the authors
  %% supposed the isomorphism mapping $\hk\oplus \FF f$ onto
  %% $\hk\oplus \FF f'$ (a necessary require). I think that it needs
  %% to verify the proof for this situation and rewrite it
  %% clearer. Moreover, I consider the invertible condition because it
  %% serves our purpose in the 8-dimensional case}
%. This implies that if $A$ is an isomorphism from
%$\g_1$ onto $\g_2$, then 
%%It means that $A$ maps $\hk\oplus \FF f$ onto $\hk\oplus \FF
%%f'$ and 
we can decompose $A(Y)=A_0(Y)+\phi(Y)f'$ for all $Y\in\hk$
where $A_0:\hk\rightarrow \hk$ and $\phi:
\hk\rightarrow\FF$. %Moreover, by $\Zs(\g)=\FF f$ and $\Zs(\g')=\FF f'$
%so there is a nonzero $\alpha\in\FF$ such that $A(f)=\alpha f'$. 
It is
easy to check that $A_0$ is also an i-isomorphism of $\hk$.

We assume that $A(e)=\lambda e' + X + \gamma f'$ for some $X \in \hk$,
$\lambda$, $\gamma \in \FF$. Since $A$ is an isometry,
$B_{\g_2}(A(e),A(f))=1$ and $B_{\g_2}(A(e),A(e))=0$, so one has
$\lambda=\frac{1}{\alpha}$ and $\gamma=-\frac{B(X,X)}{2\lambda}$. For
all $Y\in\hk$, since $0=B_{\g_1}(Y,e)=B_{\g_2}(A(Y),A(e))$ one has
$\phi(Y)=-\frac{1}{\lambda} B(X,A_0(Y))$. Moreover, $$A \left( [e,Y]
\right) = A(D_1(Y))=A_0(D_1(Y))+\phi(D_1(Y))f'$$
and $$[A(e),A(Y)]=\lambda
D_2(A_0(Y))+\ad(X)(A_0(Y))+B(D_2(X),A_0(Y))f'$$ then
$A_0(D_1(Y))=\lambda D_2(A_0(Y))+\ad(X)(A_0(Y))$. It means
$A_0D_1=\lambda D_2 A_0+\ad(X) A_0$. In this case, if we set
$P=A_0^{-1}$ then $P^{-1}D_1P=\lambda D_2 +\ad(X)$.

Conversely, if there exist an i-isomorphism $P:\hk\rightarrow\hk$, a
nonzero $\lambda\in\FF$ and an element $X\in\hk$ such that
$P^{-1}D_1P=\lambda D_2 +\ad(X)$ then we define
$A:\g_1\rightarrow\g_2$ by: $A(e)=\lambda e' +
X-\frac{B(X,X)}{2\lambda}f'$, $A(Y)=P^{-1}(Y)-\frac{1}{\lambda}
B(X,P^{-1}(Y))f'$ and $A(f)=\frac{1}{\lambda}f'$. It is
straightforward to check that $A$ is an i-isomorphism.
\end{proof}

Since i-isomorphisms and invertible derivations preserve the center
$\Zs(\hk)$ of $\hk$ then we have the following corollary.

\begin{cor}\label{cor1}
Let $(\g_1,B_{\g_1}, D_1)$ and $(\g_2,B_{\g_2},D_2)$ be double extensions of $(\hk,B)$
where $D_1$ and $D_2$ are invertible derivations. Assume $\g_1$ and
$\g_2$ are i-isomorphic. Consider $D_1^c=D_1|_{\Zs(\hk)}$ and
$D_2^c=D_2|_{\Zs(\hk)}$. Then $D_1^c$ coincides with $D_2^c$ up to a
conjugation and up to nonzero multiples. As a consequence, if $D_1^c$
is diagonalizable then so is $D_2^c$.
\end{cor}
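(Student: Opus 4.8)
The plan is to invoke Lemma \ref{lem1} together with the basic observation that both isomorphisms and invertible derivations of $\hk$ restrict to linear automorphisms (resp.\ endomorphisms) of the center $\Zs(\hk)$. First I would recall that, since we are only told that $\g_1\simeq\g_2$ (without the extra constraint $A(\FF f)=\FF f'$), one must first argue that we may reduce to that case: an isomorphism between two double extensions automatically sends the line $\FF f$ of $\g_1$ to the line $\FF f'$ of $\g_2$, because $\FF f$ and $\FF f'$ are canonically attached to the construction. Concretely, when $D_1$ is invertible one has $[\g_1,\g_1]=\hk\oplus\FF f$ and the center $\Zs(\g_1)=\FF f$ (any central element, written $a e + h + b f$, must commute with $e$, forcing $D_1(h)=0$ hence $h=0$, and with all of $\hk$, forcing $a D_1=0$ hence $a=0$); similarly $\Zs(\g_2)=\FF f'$. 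Since any isomorphism carries $\Zs(\g_1)$ onto $\Zs(\g_2)$, every isomorphism $A:\g_1\to\g_2$ satisfies $A(\FF f)=\FF f'$, so Lemma \ref{lem1} applies verbatim.

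Next I would extract from Lemma \ref{lem1} the data of an isomorphism $P:\hk\to\hk$, a scalar $\lambda\in\FF^\times$ and an element $X\in\hk$ with
\[
P^{-1}D_1 P = \lambda D_2 + \ad(X).
\]
Now restrict both sides to $\Zs(\hk)$. On the right, $\ad(X)$ kills $\Zs(\hk)$ by definition of the center, so $(\lambda D_2+\ad(X))|_{\Zs(\hk)} = \lambda D_2^c$. On the left, $P$ is an automorphism of $\hk$, hence preserves $\Zs(\hk)$ (and $P^{-1}$ does too); likewise $D_1$ preserves $\Zs(\hk)$ because $D_1$ is a derivation and, being invertible, maps $\Zs(\hk)$ onto itself. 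Therefore $(P^{-1}D_1 P)|_{\Zs(\hk)} = (P|_{\Zs(\hk)})^{-1}\, D_1^c\, (P|_{\Zs(\hk)})$. Writing $Q:=P|_{\Zs(\hk)}\in\GL(\Zs(\hk))$ we obtain
\[
Q^{-1} D_1^c Q = \lambda D_2^c,
\]
which is precisely the assertion that $D_1^c$ and $D_2^c$ agree up to conjugation and up to a nonzero scalar. The final sentence is then immediate: diagonalizability of an operator is unchanged by conjugation and by multiplication by a nonzero scalar, so if $D_1^c$ is diagonalizable, so is $\lambda D_2^c$ and hence $D_2^c$.

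I do not expect a serious obstacle here; the only point requiring a little care is the preliminary reduction, i.e.\ checking that $\FF f$ is intrinsic to the double extension when $D$ is invertible so that the hypothesis $A(\FF f)=\FF f'$ of Lemma \ref{lem1} comes for free. One should also make sure that "$D_1$ invertible $\Rightarrow$ $D_1$ maps $\Zs(\hk)$ onto $\Zs(\hk)$" is used correctly: a derivation always sends $\Zs(\hk)$ into itself, and invertibility of $D_1$ on the finite-dimensional space $\Zs(\hk)$ then upgrades this to a bijection, so that $D_1^c$ is itself an invertible operator on $\Zs(\hk)$ — this is what makes the conjugacy statement meaningful. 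Everything else is the formal restriction computation above.
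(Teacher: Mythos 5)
Your proposal is correct and follows essentially the same route as the paper: show that invertibility of $D_1,D_2$ forces $\Zs(\g_1)=\FF f$ and $\Zs(\g_2)=\FF f'$ so that any isomorphism satisfies $A(\FF f)=\FF f'$, then apply Lemma \ref{lem1} and restrict the relation $P^{-1}D_1P=\lambda D_2+\ad(X)$ to $\Zs(\hk)$, where $\ad(X)$ vanishes. The paper's proof is just a terser version of the same argument; your added details (the explicit computation of $\Zs(\g_1)$ and the check that $P$ and $D_1$ stabilize $\Zs(\hk)$) are all accurate.
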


\begin{defn}
Let $\qk$ be a vector space of dimension $m$ endowed with a
nondegenerate symmetric bilinear form $B$. In this case, $(\qk,B)$ is
called a {\em quadratic vector space}. We call a {\em canonical basis}
$\Bc = \{ X_1, \dots, X_m \}$ of $\qk$ if it satisfies the following
conditions: if $m$ even, $m = 2n$, write $$\Bc = \{X_1, \dots, X_n,
Z_1, \dots, Z_n\},$$ if $m$ is odd, $m = 2n+1$, write $$\Bc = \{X_1,
\dots, X_n, T, Z_1, \dots, Z_n \}.$$ One has:

\begin{itemize}

\item if $m = 2n$ then $B(X_i, Z_j) =\delta_{ij},\
B(X_i, X_j) = B(Z_i, Z_j) = 0$ where $1 \leq
i,j \leq n$.
\item if $m = 2n+1$ then $B(X_i, Z_j) = \delta_{ij}$, $B(X_i, X_j) = B(Z_i,
    Z_j) = B(X_i, T) = B(Z_j, T) =
    0$, $B(T,T) = 1$ where $1 \leq i,j \leq n$.
\end{itemize}
\end{defn}

\begin{defn}\label{defn1.6}
Let $(\qk, B_{\qk})$, $(\hk, B_{\hk})$ be a quadratic vector
spaces. Assume that $\hk$ is 4-dimensional. Let $\{X_1,X_2,Z_1,Z_2\}$
be a canonical basis of $\hk$ and $C_1$, $C_2\in\ok(\qk)$ satisfying
$C_1C_2=C_2C_1$. We define on $\g:=\qk\oplus\hk$ the product as
follows: $$[X_i,X]=C_i(X),\ [X,Y]=B_{\qk}(C_1(X),Y)Z_1+B_{\qk}(C_2(X),Y)Z_2$$
for all $X,\ Y\in\qk$ and $[\hk,\hk]=0$. Then $\g$ becomes a quadratic
Lie algebra with invariant bilinear form $B=B_{\qk}+B_{\hk}$ and it is
called a {\em double extension of $\qk$ by the pair $(C_1,C_2)$}.
\end{defn}

\begin{prop}\label{prop2}
Let $\g$ and $\g'$ be double extensions of $(\qk,B_{\qk})$ by the
pairs $(C_1,C_2)$ and $(C_1',C_2')$ respectively. We assume $C_1$ and
$C_1'$ are invertible. Then $\g$ and $\g'$ are i-isomorphic if and only
if there exist an isometry $P$ of $\qk$ and nonzero pairs
$(\lambda_1,\lambda_2)$, $(\gamma_1,\gamma_2)$ in $\FF^2$ such that
$\lambda_1\gamma_2-\lambda_2\gamma_1 \neq 0$,
$$PC_1P^{-1}=\lambda_1C_1'+\lambda_2C_2'\ \ \text{and}\ \ PC_2P^{-1}=\gamma_1C_1'+\gamma_2C_2'.$$
\end{prop}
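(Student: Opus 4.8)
The plan is to follow closely the proof of Lemma~\ref{lem1}, replacing the hyperbolic plane $\FF e\oplus\FF f$ by the four-dimensional fibre $\hk=\spa\{X_1,X_2,Z_1,Z_2\}$. Write $\g=\qk\oplus\hk$, $\g'=\qk\oplus\hk'$ with $\hk'=\spa\{X_1',X_2',Z_1',Z_2'\}$, set $W=\spa\{Z_1,Z_2\}$ and $W'=\spa\{Z_1',Z_2'\}$, and let $M$ be the $2\times2$ matrix with rows $(\lambda_1,\lambda_2)$ and $(\gamma_1,\gamma_2)$. For the ``if'' direction I would exhibit the isomorphism directly: put $A(X)=P(X)$ for $X\in\qk$, $A(X_i)=\sum_p M_{ip}X'_p$ and $A(Z_j)=\sum_q (M^{-1})_{qj}Z'_q$. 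That $A$ is an isometry falls out of $MM^{-1}=\Id$ together with $P$ being an isometry of $(\qk,B_\qk)$, and $A$ is bijective because $P$ and $M$ are. Checking that $A$ is a Lie homomorphism is a bracket-by-bracket verification: brackets internal to $\hk$ vanish on both sides; $[X_i,X]$ for $X\in\qk$ uses the two displayed identities $PC_1P^{-1}=\lambda_1C'_1+\lambda_2C'_2$ and $PC_2P^{-1}=\gamma_1C'_1+\gamma_2C'_2$; and $[X,Y]$ for $X,Y\in\qk$ uses their inverted form $C'_k=\sum_l(M^{-1})_{kl}PC_lP^{-1}$ together with the fact that $P$ preserves $B_\qk$.

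For the ``only if'' direction, let $A\colon\g\to\g'$ be an isomorphism. The decisive first step is to show $A(W)=W'$, which plays the role of the hypothesis $A(\FF f)=\FF f'$ that Lemma~\ref{lem1} took for granted. Since $C_1$ is invertible one checks that $[\g,\g]=\qk\oplus[\qk,\qk]$ with $[\qk,\qk]\subseteq W$, and that the radical of $B|_{[\g,\g]}$ is exactly $[\qk,\qk]$ (the $\qk$-part is nondegenerate, while $W$ is totally isotropic and orthogonal to $\qk$). Now $[\qk,\qk]=W$ precisely when $C_1,C_2$ are linearly independent, and in that case $W$ is the radical of $B|_{[\g,\g]}$, hence intrinsic to $\g$, likewise $W'$ to $\g'$, so $A(W)=W'$. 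The dependent case $C_2=cC_1$ is the one genuinely extra situation: there $\g$ is \emph{decomposable}, being an orthogonal sum $\g_0\oplusp\zk$ with $\g_0$ the one-dimensional double extension of $\qk$ by $C_1$ and $\zk$ a two-dimensional abelian ideal, and I would settle this case by appeal to Lemma~\ref{lem1}. Since the dimension of the radical of $B|_{[\g,\g]}$ is an isomorphism invariant, $\g\cong\g'$ forces $C'_1,C'_2$ into the same (in)dependence case, so the two subcases do not mix.

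Assume henceforth $C_1,C_2$ (hence $C'_1,C'_2$) linearly independent and $A(W)=W'$. Then $A([\g,\g])=[\g',\g']$ gives $A(\qk\oplus W)=\qk\oplus W'$, so for $X\in\qk$ we may write $A(X)=P(X)+\psi(X)$ with $P\colon\qk\to\qk$ linear and $\psi(X)\in W'$. As $W'$ is isotropic and orthogonal to $\qk$ while $A$ is an isometry, $P$ is an isometry of $(\qk,B_\qk)$; and $P$ is invertible, since $P(X)=0$ would give $A(X)=\psi(X)\in W'\subseteq\Zs(\g')$, forcing $X\in\Zs(\g)\cap\qk=\{0\}$ (here $C_1$ invertible is used). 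Next write $A(X_i)=q_i+\sum_p\mu_{ip}X'_p+w_i$ with $q_i\in\qk$, $w_i\in W'$, and set $(\lambda_1,\lambda_2)=(\mu_{11},\mu_{12})$, $(\gamma_1,\gamma_2)=(\mu_{21},\mu_{22})$. Expanding $A([X_i,X])=[A(X_i),A(X)]$ for $X\in\qk$: on the right side the contributions of $q_i$, $w_i$ and $\psi$ all land in $W'$, whereas $[\sum_p\mu_{ip}X'_p,P(X)]=(\mu_{i1}C'_1+\mu_{i2}C'_2)(P(X))\in\qk$; comparing the $\qk$-components yields $PC_iP^{-1}=\mu_{i1}C'_1+\mu_{i2}C'_2$, which are precisely the two asserted relations. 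Finally $\lambda_1\gamma_2-\lambda_2\gamma_1\ne0$: otherwise a nonzero combination of the form $X_2-tX_1$ (or $X_1$ itself) would satisfy $A(X_2-tX_1)\in\qk\oplus W'=[\g',\g']$, hence $X_2-tX_1\in[\g,\g]=\qk\oplus W$, contradicting that it has nonzero component along $\spa\{X_1,X_2\}$.

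The only real obstacle is the identification $A(W)=W'$: once one hits on the correct intrinsic description of $W$ — the radical of the invariant form on the derived algebra, valid exactly when $C_1,C_2$ are independent — the remainder is the same linear bookkeeping as in Lemma~\ref{lem1}. The point requiring care is that the ``error terms'' $q_i$, $w_i$, $\psi$ are genuinely present, just as in the converse half of Lemma~\ref{lem1}; but they affect only the $W'$-components of the brackets in question and so never enter the equations that determine $P$ and the maps $C_i$.
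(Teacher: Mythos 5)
Your proof is correct and follows essentially the same route as the paper's: identify $W=\spa\{Z_1,Z_2\}$ as an intrinsic subspace that any isomorphism must preserve, decompose $A$ block-by-block, read off $PC_iP^{-1}=\mu_{i1}C_1'+\mu_{i2}C_2'$ from the $\qk$-components of $A[X_i,X]=[A(X_i),A(X)]$, and in the converse direction write down the isomorphism explicitly (your $M^{-1}$ is exactly the paper's linear system for the $a_i,b_i$). The only divergence is that you characterize $W$ as the radical of $B$ restricted to $[\g,\g]$ and treat the degenerate case $C_2=cC_1$ separately, whereas the paper identifies $W$ with $\Zs(\g)$ (which tacitly assumes $C_1,C_2$ linearly independent); your treatment is if anything the more careful one on that point.
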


\begin{proof}
We write $\g=\qk\oplus\hk$ and $\g'=\qk\oplus\hk'$ as double
extensions by the pairs $(C_1,C_2)$ and $(C_1',C_2')$ respectively
where $\hk=\spa\{X_1,X_2,Z_1,Z_2\}$,
$\hk'=\spa\{X_1',X_2',Z_1',Z_2'\}$, $[X_i,X]=C_i(X)$ and
$[X_i',X]=C_i'(X)$ for all $X\in\qk$. Denote by $B_{\g}$ and $B_{\g'}$
the invariant symmetric bilinear forms on $\g$ and $\g'$ respectively.

If $A:\g\rightarrow \g'$ is an i-isomorphism then it maps $\hk\oplus\FF Z_1\oplus \FF Z_2$ onto $\hk\oplus\FF Z_1'\oplus \FF Z_2'$ since $C_1$, $C_1'$ are invertible . Moreover, one has $\Zs(\g)=\FF Z_1\oplus\FF Z_2$ and $\Zs(\g')=\FF Z_1'\oplus\FF Z_2'$. Therefore,  we can assume 
$$A(X_1)=\lambda_1X_1'+\lambda_2X_2'+T_1+\alpha_1Z_1'+\alpha_2Z_2',$$ $$A(X_2)=\gamma_1X_1'+\gamma_2X_2'+T_2+\beta_1Z_1'+\beta_2Z_2',$$
$A(X)=P(X)+\phi_1(X)Z_1'+\phi_2(X)Z_2'$ for all $X\in\qk$, and $A(Z_1)=a_1Z_1'+a_2Z_2'$, $A(Z_2)=b_1Z_1'+b_2Z_2'$ where $T_1,T_2\in\qk$, $P:\qk\rightarrow\qk$ and $\phi_1,\phi_2:\qk\rightarrow\FF$. Since $B_{\qk}(P(X),P(Y))=B_{\g'}(A(X),A(Y))=B_{\qk}(X,Y)$ for all $X,Y\in\qk$ then $P$ is an isometry of $\qk$. On the other hand, $B_{\g'}(A(X_1),A(Z_1)) = 1$ and $B_{\g'}(A(X_2),A(Z_1)) = 0$ then the determinant 
$\left|\begin{matrix} \lambda_1 & \lambda_2  \\ \gamma_1 & \gamma_2\end{matrix}\right|=\lambda_1\gamma_2-\lambda_2\gamma_1$ must be nonzero. By checking the conditions $A[X_1,X]=[A(X_1),A(X)]$ and $A[X_2,X]=[A(X_2),A(X)]$ we obtain $PC_1P^{-1}=\lambda_1C_1'+\lambda_2C_2'$ and $PC_2P^{-1}=\gamma_1C_1'+\gamma_2C_2'$.

Conversely, if there exist an isometry $P$ of $\qk$ and nonzero pairs $(\lambda_1,\lambda_2)$, $(\gamma_1,\gamma_2)$ in $\FF^2$ such that $\lambda_1\gamma_2-\lambda_2\gamma_1 \neq 0$, $PC_1P^{-1}=\lambda_1C_1'+\lambda_2C_2'$ and $PC_2P^{-1}=\gamma_1C_1'+\gamma_2C_2'$ then we define 
$A(X_1)=\lambda_1X_1'+\lambda_2X_2'$, $A(X_2)=\gamma_1X_1'+\gamma_2X_2'$, $A(X)=P(X)$ for all $X\in\qk$, and $A(Z_1)=a_1Z_1'+a_2Z_2'$, $A(Z_2)=b_1Z_1'+b_2Z_2'$ where $a_i,b_i$ satisfy the conditions: $$\left\{\begin{matrix} \lambda_1a_1+\lambda_2a_2=1 \\ \gamma_1a_1+\gamma_2a_2= 0  \\ \lambda_1b_1+\lambda_2b_2=0 \\ \gamma_1b_1+\gamma_2b_2= 1\end{matrix}\right. .$$ We can check that $A$ is an i-isomorphism from $\g$ onto $\g'$.
\end{proof}

Definition \ref{defn1.6} and Proposition \ref{prop2} are enough for
our purpose in the classification of solvable quadratic Lie algebras
of low dimensions. But for a larger view about quadratic Lie algebras
having $[[\g,\g],[\g,\g]]\subset\Zs(\g)$, we introduce another type as
in the below example.

\begin{ex}
Let $\ak=\qk\oplusp\FF T$, $\hk$ be quadratic vector spaces with
nondegenerate symmetric bilinear forms $B_{\ak}$ and $B_{\hk}$
respectively. We assume $\hk$ is 4-dimensional and
$B_{\ak}(T,T)=1$. Let $\{X_1,X_2,Z_1,Z_2\}$ be a canonical basis of
$\hk$, and $C_1$, $C_2\in\ok(\qk)$ satisfying $C_1C_2=C_2C_1$. We
define on $\g:=\ak\oplus\hk$ the product as
follows: $$[X_1,X_2]=T,\ [X_1,T]=-Z_2,\ [X_2,T]=Z_1,\ [X_i,X]=C_i(X),$$
$[X,Y]=B_{\ak}(C_1(X),Y)Z_1+B_{\ak}(C_2(X),Y)Z_2$ for all
$X,\ Y\in\qk$ and $[T,\qk]=0$. Then $\g$ becomes a quadratic Lie
algebra with invariant bilinear form $B=B_{\ak}+B_{\hk}$ and it is
called a {\em double extension of $\qk$ by the triple $(T,C_1,C_2)$}.
\end{ex}

Finally we recall the classification of solvable quadratic Lie algebras of dimension $\leq 6 $ is given in \cite{BK03} as follows.

\begin{prop}\label{prop1} Let $(\g,B)$ be a solvable quadratic Lie algebra of dimension $n\leq 6$.

\begin{enumerate}
	\item If $n\leq 3$ then $\g$ is i-isomorphic to $\FF^n$.
	\item If $n=4$ then $\g$ is i-isomorphic to $\FF^4$ or the diamond Lie algebra $\g_4$ where $\g_4$ is the double extension of $\FF^2$ by means of $D=\begin{pmatrix} 1 & 0  \\ 0 & -1 \end{pmatrix}$ in a canonical basis of $\FF^2$.
	\item If $n=5$ then $\g$ is i-isomorphic to $\FF^5$, $\g_4\oplusp\FF$ or $\g_5$ where $\g_5$ is the double extension of $\FF^3$ by means of $D=\begin{pmatrix} 0 & 1  & 0 \\ 0 & 0 & -1 \\ 0 & 0 & 0\end{pmatrix}$ in a canonical basis of $\FF^3$.
	\item If $n=6$ then $\g$ is i-isomorphic to $\FF^6$, $\g_4\oplusp\FF^2$, $\g_5\oplusp\FF$ or the quadratic Lie algebras $\g_{6,1}$, $\g_{6,2}(\lambda)$ and $\g_{6,3}$ which are the double extensions of $\FF^4$ by means of 
$$D=\begin{pmatrix} 0 & 1  & 0 & 0\\ 0 & 0 & 0 & 0 \\ 0 & 0 & 0 & 0\\ 0 & 0 & -1 & 0\end{pmatrix},\ \begin{pmatrix} 1 & 0  & 0 & 0\\ 0 & \lambda & 0 & 0 \\ 0 & 0 & -1 & 0\\ 0 & 0 & 0 & -\lambda\end{pmatrix}\ \text{and}\ \begin{pmatrix} 1 & 1  & 0 & 0\\ 0 & 1 & 0 & 0 \\ 0 & 0 & -1 & 0\\ 0 & 0 & -1 & -1\end{pmatrix},$$ respectively in a canonical basis of $\FF^4$ with $0<\left|\lambda\right|\leq 1$. In this case, $\g_{6,2}(\lambda_1)$ and $\g_{6,2}(\lambda_2)$ is i-isomorphic if and only if $\lambda_2=\pm\lambda_1$ or $\lambda_2=\pm\lambda_1^{-1}$.
\end{enumerate}

\end{prop}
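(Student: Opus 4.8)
The plan is to prove Proposition \ref{prop1} by strong induction on $n$, using the double extension reduction of Medina--Revoy together with the classification criteria of Lemma \ref{lem1} and the structural remarks already established. First I would invoke the fundamental theorem on quadratic Lie algebras: every indecomposable quadratic Lie algebra which is not one-dimensional or simple is a one-dimensional double extension of a quadratic Lie algebra of dimension $n-2$; and every decomposable one splits off a nondegenerate ideal, hence by induction is an orthogonal direct sum of lower-dimensional indecomposables and copies of $\FF$. Since $\g$ is solvable, no simple summand occurs, so the only building blocks in low dimension are $\FF$, $\g_4$, $\g_5$ and the indecomposables of dimension $6$. Thus the content of the proposition is: (i) the list of \emph{indecomposable} solvable quadratic Lie algebras in each dimension $\leq 6$, and (ii) the stated isomorphism relations among the $\g_{6,2}(\lambda)$.

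For the low dimensions $n\leq 3$, any quadratic Lie algebra is abelian: the center is totally isotropic for the indecomposable part, forcing $\dim\g \geq 2\dim[\g,\g]$ by the remark $\Zs(\g)\subset[\g,\g]$ combined with nilpotency of $[\g,\g]$ in the solvable case, and a short dimension count rules out any nonabelian indecomposable; hence $\g\cong\FF^n$. For $n=4,5,6$ I would argue that an indecomposable $\g$ must be a double extension of an \emph{abelian} quadratic vector space $\hk$ of dimension $n-2$ by a skew-symmetric derivation $D$, because the only indecomposable quadratic Lie algebras of dimension $\leq 4$ are $\FF$ (trivially, as a summand) and $\g_4$, and one checks that a double extension built on a base containing a nondegenerate nonabelian ideal would itself be decomposable or have a noncentral reductive piece incompatible with solvability; more precisely, for $n=4$ the base is $2$-dimensional abelian, for $n=5$ it is $3$-dimensional abelian, and for $n=6$ it is $4$-dimensional, and in the last case one must also consider a double extension of $\g_4$, which I would show reproduces $\g_5\oplusp\FF$ or a decomposable algebra. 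So the problem reduces to classifying skew-symmetric derivations $D$ of $\FF^{n-2}$ (equivalently, elements of $\ok(n-2)$) up to the equivalence $D\sim \lambda PDP^{-1}$ with $P\in\OO(n-2)$ and $\lambda\in\FF^\times$ (here $\ad(X)=0$ since the base is abelian), discarding those $D$ whose kernel is nondegenerate (which give decomposable $\g$) — this is exactly the classification of $\OO(m)$-orbits of nilpotent-or-not elements of $\ok(m)$ up to scaling, and for $m=2,3,4$ it is elementary: one reads off the canonical forms $\mathrm{diag}(1,-1)$ for $m=2$; the regular nilpotent for $m=3$ (a nonzero semisimple element has nondegenerate kernel, hence is decomposable); and for $m=4$ the three Jordan-type normal forms listed, parametrized in the semisimple case by $\lambda$.

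The main obstacle — and the only genuinely non-routine point — is the isomorphism statement for the family $\g_{6,2}(\lambda)$. Here I would apply Lemma \ref{lem1} with $\hk=\FF^4$ abelian: $\g_{6,2}(\lambda_1)\cong\g_{6,2}(\lambda_2)$ iff there is $P\in\OO(4)$ and $\mu\in\FF^\times$ with $P D_{\lambda_1} P^{-1} = \mu D_{\lambda_2}$, since $\ad(X)=0$. Thus the eigenvalue multiset of $D_{\lambda_1}$, namely $\{1,-1,\lambda_1,-\lambda_1\}$, must equal $\mu$ times that of $D_{\lambda_2}$, namely $\mu\{1,-1,\lambda_2,-\lambda_2\}$; matching these symmetric (under negation) multisets forces $\mu\in\{\pm1,\pm\lambda_2^{-1},\pm\lambda_1^{-1},\dots\}$ and a short case analysis yields $\lambda_2 = \pm\lambda_1$ or $\lambda_2=\pm\lambda_1^{-1}$. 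Conversely, for each of these relations one writes down an explicit permutation-and-sign matrix $P\in\OO(4)$ (permuting the canonical pairs $(X_i,Z_i)$ appropriately, possibly swapping $X_i\leftrightarrow Z_i$ to realize inversion of an eigenvalue, which is orthogonal for the hyperbolic form) together with the scalar $\mu=\pm1$ or $\mu=\pm\lambda_1^{\mp1}$, and checks $PD_{\lambda_1}P^{-1}=\mu D_{\lambda_2}$ directly. One must be slightly careful that $P$ be an \emph{isometry} of the canonical form on $\FF^4$ rather than merely invertible, but since swapping an $X_i$ with its dual $Z_i$ and negating preserves $B$, all the needed $P$ are available. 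The normalization $0<|\lambda|\leq 1$ is then just a choice of representative in each equivalence class, and $\lambda=0$ is excluded because it makes $D$ noninvertible with nondegenerate kernel, rendering $\g$ decomposable.
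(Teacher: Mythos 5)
The paper does not prove this proposition at all: it is explicitly \emph{recalled} from Baum--Kath \cite{BK03} (``Finally we recall the classification \dots is given in \cite{BK03}''), so there is no internal proof to compare against. Your reconstruction is, however, exactly the strategy the authors themselves deploy for dimensions $7$ and $8$ in Sections 2--3: reduce an indecomposable solvable $\g$ to a one-dimensional double extension of an $(n-2)$-dimensional solvable base (using that the center is totally isotropic), observe that in dimensions $\le 6$ the base can be taken Abelian (for $n=6$ the only non-Abelian candidate base is $\g_4$, all of whose skew-symmetric derivations are inner, so Lemma \ref{lem2.1} reduces that case to $D=0$, i.e.\ to $\g_4\oplusp\FF^2$), and then classify $D\in\ok(m)$, $m\le 4$, up to $\OO(m)$-conjugacy and scaling as in \cite{DPU12}, with Lemma \ref{lem1} (plus Corollary \ref{cor1} to see that any isomorphism preserves $\FF f=\Zs(\g_{6,2}(\lambda))$) giving the eigenvalue-multiset argument for the relation $\lambda_2=\pm\lambda_1^{\pm1}$. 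That part of your argument is sound and is the genuinely non-routine point; your converse construction via signed permutations of the hyperbolic pairs is also correct.

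Two small imprecisions are worth fixing. First, the inequality is reversed: since $\Zs(\g)=[\g,\g]^\perp$, the condition $\Zs(\g)\subset[\g,\g]$ forces $\dim\g\le 2\dim[\g,\g]$, not $\ge$; the dimension count for $n\le 3$ still goes through (e.g.\ for $n=3$ one would need $\dim[\g,\g]\ge 2$ together with a one-dimensional center inside $[\g,\g]$, which no $3$-dimensional solvable Lie algebra realizes). Second, the elements of $\ok(m)$ to be discarded are those whose kernel is \emph{not totally isotropic}, not merely those with nondegenerate kernel: the nilpotent orbit of Jordan type $[3,1]$ in $\ok(4)$ has a $2$-dimensional kernel that is degenerate yet contains a nonisotropic vector, hence yields a decomposable algebra and must be excluded from your list of normal forms for $m=4$ even though its kernel is not nondegenerate. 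With these corrections your outline is a legitimate proof along the lines of \cite{BK03} and of the paper's own Sections 2--3.
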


For the non solvable case of dimension $n\leq 6$, the reader is referred to \cite{CS08} or \cite{BE14} where we have five Lie algebras $\slk_2(\FF)$, $\slk_2(\FF)\oplusp\FF^i$, $1\leq i\leq 3$ and $T^*_0\left(\slk_2(\FF)\right)$ the semidirect product of $\slk_2(\FF)$ and its dual space by the coadjoint representation.
\section{Solvable quadratic Lie algebras of dimension 7}\label{sec2}

In this section, we shall give a complete classification of 7-dimensional solvable quadratic Lie algebras. First we recall Lemma 5.1 in \cite{Med85} as follows.
\begin{lem}\label{lem2.1}
Let $\g$ be a quadratic Lie algebra and $D_1,\ D_2$ skew-symmetric derivations of $\g$. If $D_1-D_2=\ad(X)$ with $X\in\g$ then double extensions of $\g$ by means of $D_1$ and $D_2$ are i-isomorphic.
\end{lem}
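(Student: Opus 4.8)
The statement is a special case of Lemma \ref{lem1}: we want to show that the double extensions $\g_1$ and $\g_2$ of $\g$ by $D_1$ and $D_2$ are isomorphic under the hypothesis $D_1 - D_2 = \ad(X)$. The plan is simply to verify that the criterion of Lemma \ref{lem1} is met with the choices $P = \mathrm{Id}_\g$, $\lambda = 1$, and the given element $X \in \g$. Indeed, with these choices one has $P^{-1} D_1 P = D_1 = D_2 + \ad(X) = \lambda D_2 + \ad(X)$, which is exactly the condition appearing in Lemma \ref{lem1}. That lemma then produces an isomorphism $A : \g_1 \to \g_2$ with $A(\FF f) = \FF f'$, and in particular $\g_1 \simeq \g_2$.

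For completeness I would also unwind what the resulting isomorphism looks like, using the explicit formula from the converse direction of the proof of Lemma \ref{lem1}: writing $\g_1 = \g \oplus \FF e \oplus \FF f$ and $\g_2 = \g \oplus \FF e' \oplus \FF f'$, the map $A$ is given by $A(f) = f'$, $A(Y) = Y - B(X,Y) f'$ for $Y \in \g$, and $A(e) = e' + X - \tfrac{1}{2} B(X,X) f'$. One can then observe directly that $A$ is a Lie algebra homomorphism: on $\g$ the correction term lands in the center $\FF f'$ and matches the $B(D_1(Y),Z)f$ versus $B(D_2(Y),Z)f'$ discrepancy precisely because $D_1 - D_2 = \ad(X)$ and $B$ is invariant; and $A([e,Y]) = A(D_1(Y))$ agrees with $[A(e),A(Y)] = D_2(Y) + [X,Y] + (\text{central term})$ since $D_1(Y) = D_2(Y) + [X,Y]$. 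Finally $A$ is an isometry by construction of $B_{\g_2}$.

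There is essentially no obstacle here: the lemma is an immediate corollary of Lemma \ref{lem1}, and the only thing to be careful about is bookkeeping the central correction terms and the sign and factor $\tfrac{1}{2}$ in the $e$-component so that $A$ is genuinely an isometry (this uses $B_{\g_2}(e',e') = 0$ and $B_{\g_2}(e',f') = 1$). Since the hypothesis allows $X$ to be arbitrary in $\g$ and puts no invertibility assumption on $D_1$ or $D_2$, no further case analysis is needed, and the short argument via Lemma \ref{lem1} with $P = \mathrm{Id}$, $\lambda = 1$ is the cleanest way to present it.
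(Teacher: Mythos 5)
Your proposal is correct and follows exactly the paper's own route: the paper proves Lemma \ref{lem2.1} simply by remarking that it is the special case of Lemma \ref{lem1} with $P=\id$ and $\lambda=1$, which is precisely your argument. Your additional unwinding of the explicit isomorphism $A(f)=f'$, $A(Y)=Y-B(X,Y)f'$, $A(e)=e'+X-\tfrac{1}{2}B(X,X)f'$ is consistent with the converse direction of the paper's proof of Lemma \ref{lem1} and is a harmless (correct) elaboration.
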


Remark that this is a particular case of Lemma \ref{lem1} where $P=\id$ and $\lambda=1$. Another useful and straightforward result is that double extensions of
$\g$ by means of $D$ and $\lambda D$ with nonzero $\lambda$ are i-isomorphic.

\begin{prop}\label{prop3} Let $\g$ be a solvable quadratic Lie algebra of dimension $7$.

\begin{enumerate}
	\item If $\g$ is decomposable then $\g$ is i-isomorphic to
       $\g_6\oplusp\FF$ where $\g_6$ is a solvable quadratic Lie
       algebra of dimension 6 given in Proposition \ref{prop1} (4).

	\item If $\g$ is indecomposable then $\g$ is i-isomorphic to each
       of the following quadratic Lie algebras:
	\begin{enumerate}
	\item[(i)] $\g_{7,1}$ and $\g_{7,2}$: the double extensions of
       $\FF^5$ by means of
$$D = \begin{pmatrix} 0 & 1 & 0 & 0 & 0 \\ 0 & 0 & 1 & 0 & 0 \\ 0 & 0
       & 0 & 0 & -1 \\ 0 & 0 & 0 & 0 & 0 \\ 0 & 0 & 0 & -1 &
       0 \end{pmatrix}\ \text{and}\ \begin{pmatrix} 1 & 0 & 0 & 0 & 0
       \\ 0 & 0 & 1 & 0 & 0 \\ 0 & 0 & 0 & 0 & -1 \\ 0 & 0 & 0 & -1 &
       0 \\ 0 & 0 & 0 & 0 & 0 \end{pmatrix},$$ respectively in a
       canonical basis of $\FF^5$,
	
	\item[(ii)] $\g_{7,3}$: the double extension of $\g_5$ by means
       of $$D=\begin{pmatrix} 1 & 0 & 0 & 0 & 0\\ 0 & -1 & 0 & 0 &
       0\\ 0 & 0 & 0 & 0 & 0\\ 0 & 0 & 0 & -1 & 0 \\ 0 & 0 & 0 & 0 &
       1\end{pmatrix}$$ in a canonical basis $\{X_1,X_2,T,Z_1,Z_2\}$
       of $\g_5$ with $[X_1,X_2]=T$, $[X_1,T]=-Z_2$ and $[X_2,T]=Z_1$.
\end{enumerate}
\end{enumerate}

\end{prop}
\begin{proof}

The statement (1) is obvious and it follows Proposition
\ref{prop1}. We assume $(\g,B)$ is an indecomposable solvable quadratic
Lie algebra of dimension 7. Then $\g$ contains an isotropic central
element $Z$ and, hence, there exists an isotropic element $X\in\g$ such that $B(Z,X)=1$. If we denote by $\hk=(\FF X\oplus\FF Z)^\bot$ the orthogonal component of $\FF X\oplus\FF Z$ with respect to $B$, with the Lie structure induced by the one of $Z^\bot/\FF Z$ then we can check that $\hk$ with the bilinear form $B_\hk:=B|_{\hk\times\hk}$ is a solvable quadratic Lie algebra of dimension 5 and $\g$ is the double extension of $\hk$ by means of the derivation $D=\ad(X)|_\hk$. Therefore we can begin with $\g=\hk\oplus \FF
e\oplus \FF f$ a double extension of $(\hk,B)$ by means of $D$ where
$\hk$ is a solvable quadratic Lie algebra of dimension 5 and $D$ is a
skew-symmetric derivation of $\hk$. From the classification of
solvable quadratic Lie algebras of dimension 5 we consider the
following cases:

\begin{itemize}
	\item $\hk=\FF^5$. In this case, $\g$ is a 1-step double extension and the classification of $\g$ follows the classification of linear maps $D\in\ok(5)$ with totally isotropic kernel up to conjugation and up to nonzero multiples (see details in \cite{DPU12}). So %: choose a canonical basis of $\FF^5$ then we have two options for $D\in\ok(5)$ with matrix:
	%$$ D = \begin{pmatrix} 0 & 1 & 0 & 0 & 0 \\ 0 & 0 & 1 & 0 & 0 \\ 0 & 0 & 0 & 0 & -1 \\ 0 & 0 & 0 & 0 & 0 \\ 0 & 0 & 0 & -1 & 0 \end{pmatrix} \ \text{or}\ D = \begin{pmatrix} 1 & 0 & 0 & 0 & 0 \\ 0 & 0 & 1 & 0 & 0 \\ 0 & 0 & 0 & 0 & -1 \\ 0 & 0 & 0 & -1 & 0 \\ 0 & 0 & 0 & 0 & 0 \end{pmatrix}.$$
we obtain two corresponding Lie algebras given in (i).
\item $\hk=\g_4\oplusp\FF$. Choose a basis $\{X,P,Q,Z,Y\}$ of $\hk$ such that the nonzero brackets $[X,P]=P$, $[X,Q]=-Q$, $[P,Q]=Z$ and the nonzero bilinear forms $B(X,Z)=B(P,Q)=B(Y,Y)=1$.

It is straightforward to check that $D$ is a skew-symmetric derivation of $\hk$ if and only if $D$ has the matrix in the given basis as follows:

$$ D = \begin{pmatrix} 0 & 0 & 0 & 0 & 0 \\ y & x & 0 & 0 & 0 \\ z & 0 & -x & 0 & 0 \\ 0 & -z & -y & 0 & t \\ -t & 0 & 0 & 0 & 0 \end{pmatrix}.$$
where $x,\ y,\ z,\ t\in\FF$. By Lemma \ref{lem2.1}, we can assume that $x$, $y$, $z$ are zero and then %only need $D$ with matrix: $$ D = \begin{pmatrix} 0 & 0 & 0 & 0 & %0 \\ 0 & 0 & 0 & 0 & 0 \\ 0 & 0 & 0 & 0 & 0 \\ 0 & 0 & 0 & 0 & t \\ -t & 0 & 0 & 0 & 0 \end{pmatrix}.$$
it is easy to see that the double extension of $\hk$ by means of $D$ is a 1-step double extension.

\item $\hk=\g_5$. Choose a canonical basis $\{X_1,X_2,T,Z_1,Z_2\}$ of $\g_5$ with $[X_1,X_2]=T$, $[X_1,T]=-Z_2$, $[X_2,T]=Z_1$. %and the nonzero bilinear form $B(X_i,Z_i)=B(T,T)=1$, $i=1,2$.
We can check that $D$ is a skew-symmetric derivation of $\g_5$ if and only if the matrix of $D$ in the given basis is:
$$D = \begin{pmatrix} x & z & 0 & 0 & 0 \\ y & -x & 0 & 0 & 0 \\ -b & -c & 0 & 0 & 0 \\ 0 & -t & b & -x & -y \\ t & 0 & c & -z & x \end{pmatrix}$$
where $x,\ y,\ z,\ t,\ b,\ c\in\FF$. Combined with Lemma \ref{lem2.1}, we consider only $D$ with matrix:
$ D = \begin{pmatrix} A & 0 & 0 \\ 0 & 0 & 0 \\ 0 & 0 & -A^t \end{pmatrix}$ where $A=\begin{pmatrix} x & z\\ y & -x \end{pmatrix}$ and $A^t$ is the transpose of $A$. 

If $P$ is a $2\times 2$ matrix such that its determinant is 1. Set $\begin{pmatrix} X_1' & X_2' \end{pmatrix}=\begin{pmatrix} X_1 & X_2 \end{pmatrix}P$
and $\begin{pmatrix} Z_1' & Z_2' \end{pmatrix}=\begin{pmatrix} Z_1 & Z_2 \end{pmatrix}(P^t)^{-1}$. It is easy to check that $\{X_1',X_2',T,Z_1',Z_2'\}$ is still a canonical basis of $\g_5$. Moreover, $[X_1',X_2']=T$, $[X_1',T]=-Z_2$, $[X_2',T]=Z_1$. In this case, the matrix of $D$ in the basis $\{X_1',X_2',T,Z_1',Z_2'\}$ is given by:
$$D=\begin{pmatrix} P^{-1}AP & 0 & 0 \\ 0 & 0 & 0\\ 0 & 0 &  -P^tA^t(P^t)^{-1}\end{pmatrix}.$$

Since every matrix is similar to a matrix in Jordan form and $A$ has the zero trace so we only consider matrices $A=\begin{pmatrix} 0 & 0  \\ 0 & 0 \end{pmatrix}$, $\begin{pmatrix} 0 & 1  \\ 0 & 0 \end{pmatrix}$ and $\begin{pmatrix} 1 & 0  \\ 0 & -1 \end{pmatrix}$.

It is obvious that if $A=\begin{pmatrix} 0 & 0  \\ 0 & 0 \end{pmatrix}$ then $\g$ is decomposable. If $A=\begin{pmatrix} 0 & 1  \\ 0 & 0 \end{pmatrix}$. In this case, the Lie bracket on $\g$ is defined by:
$$
[e,X_2]=X_1,\ [e,Z_1]=-Z_2, \ [X_1,X_2]=T,\ [X_1,T]=-Z_2,
$$
$[X_2,T]=Z_1$ and $[X_2,Z_1] = f$. Then $\g=\qk\oplus\left( \FF X_2\oplus\FF Z_2\right)$ is the double extension of $\qk$ spanned by $\{e,X_1,T,f,Z_1\}$ by the skew-symmetric $C:\qk\rightarrow\qk$, $C(e)=-X_1$, $C(X_1)=-T$, $C(T)=Z_1$ and $C(Z_1)=f$. This quadratic Lie algebra is 1-step.

It remains $A=\begin{pmatrix} 1 & 0 \\ 0 & -1 \end{pmatrix}$. In this
case, $\left[\left[\g,\g\right],\left[\g,\g\right]\right]$ is
generated by $\{T,Z_1,Z_2,f\}$ so $\g$ is not a 1-step double
extension. Moreover, $\g$ is also indecomposable since otherwise $\g$
would be a 1-step double extension (recall that all non-Abelian solvable quadratic Lie
algebras up to dimension 6 are 1-step double extensions). This algebra
is $\g_{7,3}$ given in (ii) and the proposition is proved.
\end{itemize}
\end{proof}
%\begin{prop}
%Let $\g$ be a solvable quadratic Lie algebra of dimension 7. Then $\g$ is isomorphic to each of following quadratic Lie algebras:
%
%\begin{enumerate}
%	\item $\CC^7$, $\g_4\oplusp\CC^3$, $\g_5\oplusp\CC^2$,
	%\item $\g_6\oplusp\CC$ where $\g_6$ is a quadratic Lie algebra of dimension 6 given in Proposition ???.
%	\item The double extensions of $\CC^5$ by 
%	$$ D = \begin{pmatrix} 0 & 1 & 0 & 0 & 0 \\ 0 & 0 & 0 & 0 & 1 \\ 0 & 0 & 0 & 0 & 0 \\ 0 & 0 & -1 & 0 & 0 \\ 0 & 0 & 0 & -1 & 0 \end{pmatrix}\ \text{ and }\ D = \begin{pmatrix} 1 & 0 & 0 & 0 & 0 \\ 0 & 0 & 0 & 1 & 0 \\ 0 & 0 & 0 & 0 & 0 \\ 0 & 0 & -1 & 0 & 0 \\ 0 & 0 & 0 & 0 & -1 \end{pmatrix}.$$
%	\item $\g_{7,1(1)}$: $[e_,X_1]=-X_1$, $[e,X_2]=X_2$, $[e_,Z_1]=Z_1$, $[e_,Z_2]=-Z_2$, $[X_1,X_2]=T$, $[X_1,T]=-Z_2$, $[X_2,T]=Z_1$, $[Z_1,X_1]=[X_2,Z_2] = f$.
%\end{enumerate}
%
%\end{prop}
%\begin{rem}
%In the above proof, it is shorter if we use the similarity of the matrices. As we known, the matrix $\begin{pmatrix} -x & -z \\ -y & x\end{pmatrix}$ is similar to $\begin{pmatrix} 0 & \alpha \\ 0 & 0\end{pmatrix}$ or $\begin{pmatrix} \lambda & \alpha \\ 0 & -\lambda\end{pmatrix}$.
%\end{rem}
\begin{rem}
In the above proof, we can begin with a reduced solvable quadratic Lie algebra
$\g$ of dimension 7% having a totally isotropic center
. Since
$[\g,\g]/\Zs(\g)$ is nilpotent then $[\g,\g]/\Zs(\g)$ is either
Abelian or i-isomorphic to $\g_5$. The case $[\g,\g]/\Zs(\g)$ Abelian
were classified in \cite{KO04} and \cite{KO06} by cohomology. In the
case $[\g,\g]/\Zs(\g)$ i-isomorphic to $\g_5$, it is obvious that
$\dim(\Zs(\g))=1$, $\g$ is a double extension of $\g_5$ by a skew-symmetric derivation invertible on the center of $\g_5$ and it can not
be a 1-step double extension.
%	\item Combining with a classification result in \cite{BE14}, we obtain a complete classification of not necessarily indecomposable, non solvable, quadratic Lie algebras of dimension $\leq 10$.
\end{rem}

\section{Solvable quadratic Lie algebras of dimension 8}

%In the classification of dimension 7, there appear quadratic Lie algebras $\g_4\oplusp\FF^n$ and $\g_5\oplusp\FF^n$, $n\geq01$ where $\g_4$ and $\g_5$ are indecomposable quadratic Lie algebras of dimension 4 and 5. 
In the following we will give a classification of solvable quadratic
Lie algebras of dimension 8. We apply again the double extension
method for each of six-dimensional solvable quadratic Lie algebras
given in Proposition \ref{prop1}. In particular, we will describe step
by step double extensions of $\g_4\oplusp\FF^2$, $\g_5\oplusp\FF$,
$\g_{6,1}$, $\g_{6,2}(\lambda)$ and $\g_{6,3}$.
\subsection{Double extensions of $\g_4\oplusp\FF^2$}\hfill

It is a straightforward computation to obtain that if $\hk=\g_4\oplusp\FF^n$, $n\geq 1$, choosing a basis $\{X,P,Q,Z\}$ of $\g_4$ such that the nonzero bilinear forms $B(X,Z)=B(P,Q)=1$, the nonzero brackets $[X,P]=P$, $[X,Q]=-Q$, $[P,Q]=Z$ and an orthonormal basis $\{X_i\}$ of $\FF^n$, $1\leq i\leq n$, then all skew-symmetric derivations of $\hk$ can be described by the following matrix with respect to the basis $\{X,P,Q,Z,X_i\}$:
$$D = \begin{pmatrix} 0 & 0 & 0 & 0 &  0 \\ y & x & 0 & 0 &  0 \\ z & 0 & -x & 0 &  0 \\ 0 & -z & -y & 0 & A \\ -A^t & 0 & 0 & 0 & C \end{pmatrix}$$
where $A=\begin{pmatrix} x_1 & x_2 & ... & x_n \end{pmatrix}$, $x,\ y,\ z,\ x_i\in\FF$ and $C\in\ok(n)$. By Lemma \ref{lem2.1}, we can suppose that $\ x,\ y,\ z$ are zero. In this case, if $P$ is an isometry of $\FF^n$ then $Q:=\id\otimes P$ with $\id$ the identity map of $\g_4$ is an i-isomorphism of $\hk$ and it operates on $D$ to the matrix
$\begin{pmatrix} 0 & 0 & 0 \\ 0 & 0 & AP \\ -P^tA^t & 0 & P^{-1}CP \end{pmatrix}$.
%$$D = \begin{pmatrix} 0 & 0 & 0 & 0 &  0 \\ 0 & 0 & 0 & 0 &  0 \\ 0 & 0 & 0 & 0 &  0 \\ 0 & 0 & 0 & 0 & A \\ -A^t & 0 & 0 & 0 & C \end{pmatrix}.$$

Now we consider a particular situation where $n=2$. By the classification of $\OO(2)$-adjoint orbit in $\ok(2)$, we have two following cases:

\begin{enumerate}
	\item $C= \begin{pmatrix} 0 & 0 \\  0 & 0  \end{pmatrix}$ then it is easy to check that the double extension of $\hk$ by means of $D$ is a 1-step double extension of $\FF^6$ generated by $\{e,P,Q,f,X_1,X_2\}$.
	\item $C= \begin{pmatrix} 0 & -1 \\  1 & 0  \end{pmatrix}$ where $\{X_1,X_2\}$ is an orthonormal basis of $\FF^2$. %Then one has:
	%$$D = \begin{pmatrix} 0 & 0 & 0 & 0 &  0 &  0\\ 0 & 0 & 0 & 0 &  0 &  0\\ 0 & 0 & 0 & 0 &  0 &  0\\ 0 & 0 & 0 & 0 & x_1 & x_2 \\ -x_1 & 0 & 0 & 0 & 0 & -1 \\ -x_2 & 0 & 0 & 0 & 1 & 0\end{pmatrix}.$$
		Let $\g=\hk\oplus\FF e\oplus\FF f$ be the double extension of $\hk$ by means of $D$. Replacing  $X+x_2X_1-x_1X_2$ by $X$, $X_1-x_2 Z$ by $X_1$ and $X_2+x_1 Z$ by $X_2$, one has $\g=\spa\{e,X_1,X_2,f\}\oplusp\spa\{X,P,Q,Z\}$ decomposable.
\end{enumerate}

Hence, we can conclude that every one-dimensional double extension of $\g_4\oplusp\FF^2$ is 1-step or decomposable.
\subsection{Double extensions of $\g_5\oplusp\FF$}\label{subsec3.2}\hfill

If $\hk=\g_5\oplusp\FF^n$, $n\geq 1$, choosing a canonical basis $\{X_1,X_2,T,Z_1,Z_2\}$ of $\g_5$ such that  $[X_1,X_2]=T$, $[X_1,T]=-Z_2$, $[X_2,T]=Z_1$ and an orthonormal basis $\{Y_i\}$ of $\FF^n$, $1\leq i\leq n$, then all skew-symmetric derivations of $\g$ can be described by the following matrix with respect to the basis $\{X_1,X_2,T,Z_1,Z_2,Y_i\}$:
$$D = \begin{pmatrix} x & z & 0 & 0 & 0 & 0  \\ y & -x & 0 & 0 & 0 & 0 \\ -b & -c & 0 & 0 & 0 & 0 \\ 0 & -t & b & -x & -y & A\\ t & 0 & c & -z & x & B \\ -A^t & -B^t & 0 & 0 & 0 & C\end{pmatrix}$$
where $A=\begin{pmatrix} x_1 & x_2 & ... & x_n \end{pmatrix}$, $B=\begin{pmatrix} y_1 & y_2 & ... & y_n \end{pmatrix}$, $x,\ y,\ z,\ b,\ c,\ t,\ x_i,\ y_i\in\FF$ and $C\in\ok(n)$. Combining with Lemma \ref{lem2.1} where inner derivations can be eliminated we can assume that $b$, $c$ and $t$ are zero. %consider derivations with matrix:
%$$D = \begin{pmatrix} x & z & 0 & 0 & 0 & 0  \\ y & -x & 0 & 0 & 0 & 0 \\ 0 & 0 & 0 & 0 & 0 & 0 \\ 0 & 0 & 0 & -x & -y & A\\ 0 & 0 & 0 & -z & x & B \\ -A^t & -B^t & 0 & 0 & 0 & C\end{pmatrix}.$$

For the case of dimension 8, one has $n=1$ and then we can write $\hk=\g_5\oplusp\FF Y$ and $D = \begin{pmatrix} -x & -z & 0 & 0 & 0 & 0  \\ -y & x & 0 & 0 & 0 & 0 \\ 0 & 0 & 0 & 0 & 0 & 0 \\ 0 & 0 & 0 & x & y & -\alpha\\ 0 & 0 & 0 & z & -x & -\beta \\ \alpha & \beta & 0 & 0 & 0 & 0\end{pmatrix}$
with respect to the basis $\{X_1,X_2,T,Z_1,Z_2,Y\}$ of $\hk$. Following the classification of double extensions of $\g_5$ in Section \ref{sec2}, we consider i-isomorphisms $P:\g_5\oplusp\FF\longrightarrow \g_5\oplusp\FF$ such that $P:=Q\otimes\id$ with $Q$ an i-isomorphism of $\g_5$ and $\id$ the identity map of $\FF$. As a consequence, we have three following cases: %only focus on derivations with matrices:
%$$\begin{pmatrix} 0 & 0 & 0 & 0 & 0 & 0  \\ 0 & 0 & 0 & 0 & 0 & 0 \\ 0 & 0 & 0 & 0 & 0 & 0 \\ 0 & 0 & 0 & 0 & 0 & -\alpha\\ 0 & 0 & 0 & 0 & 0 & -\beta \\ \alpha & \beta & 0 & 0 & 0 & 0\end{pmatrix},\  \begin{pmatrix} 0 & 1 & 0 & 0 & 0 & 0  \\ 0 & 0 & 0 & 0 & 0 & 0 \\ 0 & 0 & 0 & 0 & 0 & 0 \\ 0 & 0 & 0 & 0 & 0 & -\alpha\\ 0 & 0 & 0 & -1 & 0 & -\beta \\ \alpha & \beta & 0 & 0 & 0 & 0\end{pmatrix}$$
%and $\begin{pmatrix} 1 & 0 & 0 & 0 & 0 & 0  \\ 0 & -1 & 0 & 0 & 0 & 0 \\ 0 & 0 & 0 & 0 & 0 & 0 \\ 0 & 0 & 0 & -1 & 0 & -\alpha\\ 0 & 0 & 0 & 0 & 1 & -\beta \\ \alpha & \beta & 0 & 0 & 0 & 0\end{pmatrix}$. We shall study double extensions $\g$ of $\hk$ by means of each of these cases as follows:
\begin{enumerate}
	\item $D=\begin{pmatrix} 0 & 0 & 0 & 0 & 0 & 0 \\ 0 & 0 & 0 & 0 &
       0 & 0 \\ 0 & 0 & 0 & 0 & 0 & 0 \\ 0 & 0 & 0 & 0 & 0 &
       -\alpha\\ 0 & 0 & 0 & 0 & 0 & -\beta \\ \alpha & \beta & 0 & 0
       & 0 & 0\end{pmatrix}$. If $\alpha=\beta=0$ then $\g$ is
       decomposable. If $\alpha\neq 0$ then we can choose
       $\alpha=1$. In this case, replacing $X_2-\beta X_1$ by $X_2$
       and $Z_1+\beta Z_2$ by $Z_1$, we obtain $\beta=0$.
	%$D=\begin{pmatrix} 0 & 0 & 0 & 0 & 0 & 0  \\ 0 & 0 & 0 & 0 & 0 & 0 \\ 0 & 0 & 0 & 0 & 0 & 0 \\ 0 & 0 & 0 & 0 & 0 & 0\\ 0 & 0 & 0 & 0 & 0 & -1 \\ 1 & 0 & 0 & 0 & 0 & 0\end{pmatrix}$. 
	It is easy to check that $\g$ is a 1-step double extension of $\FF^6=\spa\{e,X_2,Y,T,f,Z_2\}$.%: $\g=\spa\{e,X_2,Y,T,f,Z_2\}\oplus\CC X_1\oplus\CC Z_1$. %has $\dim([\g,\g])=5$. If $\g$ is decomposable then $\g=\g_6\oplusp\CC^2$ or $\g_7\oplusp\CC$ where $\g_6$, $\g_7$ are indecomposable solvable quadratic Lie algebras, respectively. Following the classification in dimensions 6 and 7, $\left[[\g,\g],[\g,\g]\right]=\{0\}$ shows that this is a contradiction. Therefore, $\g$ is indecomposable.
	\item $D=\begin{pmatrix} 0 & 1 & 0 & 0 & 0 & 0  \\ 0 & 0 & 0 & 0 & 0 & 0 \\ 0 & 0 & 0 & 0 & 0 & 0 \\ 0 & 0 & 0 & 0 & 0 & -\alpha\\ 0 & 0 & 0 & -1 & 0 & -\beta \\ \alpha & \beta & 0 & 0 & 0 & 0\end{pmatrix}$. If $\alpha=0$ then $\g$ is a 1-step double extension of $\FF^6=\spa\{e,X_1,Y,T,f,Z_1\}$.  %$\g=\spa\{e,X_2,Y,T,f,Z_2\}\oplus\CC X_1\oplus\CC Z_1$. 
	If $\alpha\neq 0$ and $\beta=0$ then %$D=\begin{pmatrix} 0 & 1 & 0 & 0 & 0 & 0  \\ 0 & 0 & 0 & 0 & 0 & 0 \\ 0 & 0 & 0 & 0 & 0 & 0 \\ 0 & 0 & 0 & 0 & 0 & -\alpha \\ 0 & 0 & 0 & -1 & 0 & 0 \\ \alpha & 0 & 0 & 0 & 0 & 0\end{pmatrix}$, i.e. 
	the Lie bracket on $\g$ is defined by: $[e,X_1]=\alpha Y$, $[e,X_2]=X_1$, $[e,Z_1]=-Z_2$, $[e,Y]=-\alpha Z_1$, $[X_1,X_2]=T$, $[X_1,T]=-Z_2$, $[X_2,T]=Z_1$, $[X_2,Z_1]=f$ and $[X_1,Y]=\alpha f$. It is easy to check that $\dim([\g,\g])=6$, $\dim\left(\left[[\g,\g],[\g,\g]\right]\right)=2$ and this Lie algebra is 5-nilpotent. Therefore it must be indecomposable and not a 1-step double extension by following the classification in lower dimensions. We denote this quadratic Lie algebra by $\g_{8,1}(\alpha)$. Note that $\g_{8,1}(\alpha)$ and $\g_{8,1}(-\alpha)$ is i-isomorphic.
	
	If $\alpha$ and $\beta\neq 0$ then we set $X_2:=X_2-\frac{\beta}{\alpha}X_1$ and $Z_1:=Z_1+\frac{\beta}{\alpha}Z_1$ to turn to the previous case of $\alpha\neq 0$ and $\beta=0$.
	\item $D=\begin{pmatrix} 1 & 0 & 0 & 0 & 0 & 0  \\ 0 & -1 & 0 & 0 & 0 & 0 \\ 0 & 0 & 0 & 0 & 0 & 0 \\ 0 & 0 & 0 & -1 & 0 & -\alpha\\ 0 & 0 & 0 & 0 & 1 & -\beta \\ \alpha & \beta & 0 & 0 & 0 & 0\end{pmatrix}$. In this case, $\g$ is decomposable since $u=Y-\alpha Z_1+\beta Z_2$ is central and $B(u,u)=1$.
\end{enumerate}
\subsection{Double extensions of $\g_{6,1}$}\label{subsec3.3}\hfill

Let us consider $\hk=\g_{6,1}$ and choose a canonical basis $\{X_i,Z_i\}$, $1\leq i \leq 3$, of $\hk$ such that the nonzero brackets $[X_1,X_2]=Z_3$, $[X_2,X_3]=Z_1$, $[X_3,X_1]=Z_2$. If $D$ is a skew-symmetric derivation of $\hk$ then the matrix of $D$ in the given basis is:
$$D=\begin{pmatrix} A & 0 \\ B & -A^t\end{pmatrix}$$
where $A$ is a $3\times 3$ matrix with zero trace and $B$ is a skew-symmetric $3\times 3$ matrix. Eliminating inner derivations we can assume that $B=0$. 

Let $P$ %with matrix $\begin{pmatrix} a_{11} & a_{12} &  a_{13}\\ a_{21} & a_{22} &  a_{23} \\ a_{31} & a_{32} &  a_{33}\end{pmatrix}$ 
be a $3\times 3$ matrix with determinant 1. Change the basis by setting $\begin{pmatrix} X_1' & X_2' &  X_3'\end{pmatrix}=\begin{pmatrix} X_1 & X_2 &  X_3 \end{pmatrix}P$
and $\begin{pmatrix} Z_1' & Z_2' & Z_3'\end{pmatrix}=\begin{pmatrix} Z_1 & Z_2 &  Z_3 \end{pmatrix}(P^t)^{-1}$. It is easy to check that $\{X_i',Z_i'\}$ is still a canonical basis of $\hk$ and $[X_1',X_2']=Z_3'$, $[X_2',X_3']=Z_1'$, $[X_3',X_1']=Z_2'$. In this case, the matrix of $D$ in the basis $\{X_i',Z_i'\}$:
$$D=\begin{pmatrix} P^{-1}AP & 0 \\ 0 & -P^tA^t(P^t)^{-1}\end{pmatrix}.$$
This implies the classification of double extensions of $\g_{6,1}$ following the classification of similar equivalent classes of $A$. Since $A$ has the zero trace so we have the following cases for the matrix $A$ (up to nonzero multiples):
$$\begin{pmatrix} 0 & 0 & 0 \\ 0 & 0 & 0 \\ 0 & 0 & 0 \end{pmatrix},\ \begin{pmatrix}0 & 1 & 0 \\ 0 & 0 & 0 \\ 0 & 0 & 0 \end{pmatrix},\ \begin{pmatrix} 0 & 1 & 0 \\ 0 & 0 & 1 \\ 0 & 0 & 0 \end{pmatrix},$$ 
$$\begin{pmatrix} 0 & 0 & 0 \\ 0 & 1 & 0 \\ 0 & 0 & -1 \end{pmatrix},\ \begin{pmatrix} 1 & 1 & 0 \\ 0 & 1 & 0 \\ 0 & 0 & -2 \end{pmatrix},\ \begin{pmatrix} 1 & 0 & 0 \\ 0 & \alpha & 0 \\ 0 & 0 & -1-\alpha \end{pmatrix}$$
where $\alpha\neq 0$ and $-1$. 

For two first cases, we can check that $\g$ is a 1-step double extension. For the third, $\g$ is 5-step nilpotent and $\dim([[\g,\g],[\g,\g]])=2$ so $\g$ is indecomposable and not 1-step. We denote it by $\g_{8,2}$.
%For the remain cases, deviding by $\lambda$ or $\lambda_1$ we have:
%$$\begin{pmatrix} 0 & 0 & 0 \\ 0 & 1 & 0 \\ 0 & 0 & -1 \end{pmatrix},\ \begin{pmatrix} 1 & 1 & 0 \\ 0 & 1 & 0 \\ 0 & 0 & -2 \end{pmatrix},\ \begin{pmatrix} 1 & 0 & 0 %\\ 0 & \alpha & 0 \\ 0 & 0 & -1-\alpha\end{pmatrix}.$$

\begin{enumerate}
	\item $A=\begin{pmatrix} 0 & 0 & 0 \\ 0 & 1 & 0 \\ 0 & 0 & -1 \end{pmatrix}$ then the obtained quadratic Lie algebra $\g$ is not a 1-step double extension since $[[\g,\g],[\g,\g]]$ is of 2-dimensional. We shall prove that $\g$ is indecomposable. Indeed, assume $\g$ is decomposable. Since $\Zs(\g)=\spa\{Z_1,f\}$ is totally isotropic and $\g$ is solvable then $\g$ must be decomposable by $\g=\g_4\oplusp\g_4$. Observe that the derived ideal of $\g_4\oplusp\g_4$ is isomorphic to $\hk(1)\oplus\hk(1)$ as a Lie algebra where $\hk(1)$ is the Heisenberg Lie algebra of dimension 3. This is a contradiction because $[\g,\g]=\spa\{X_2,X_3,Z_1,Z_2,Z_3,f\}$ with Lie bracket: $[X_2,Z_2]=[Z_3,X_3]=f$, $[X_2,X_3]=Z_1$.  Therefore $\g$ is indecomposable and we denote it by $\g_{8,5}$. Another proof for $\g_{8,5}$ indecomposable by applying Proposition \ref{prop2} can be found in Remark \ref{rem3.3}.% of $\g_{6,2}(1)$ which is studied in Subsection \ref{subsec3.4}. %solvable with $\dim([\g,\g])=6$ and $\dim([[\g,\g],[\g,\g]])=2$. In this case, .
	\item $A=\begin{pmatrix} 1 & 1 & 0 \\ 0 & 1 & 0 \\ 0 & 0 & -2 \end{pmatrix}$ and $\begin{pmatrix} 1 & 0 & 0 \\ 0 & \alpha & 0 \\ 0 & 0 & -1-\alpha\end{pmatrix}$ with $\alpha\neq 0$ and $-1$. In this case, the corresponding quadratic Lie algebras have $\dim([\g,\g])=7$ and $\dim([[\g,\g],[\g,\g]])=4$ so they are indecomposable and not 1-step. We denote them by $\g_{8,3}$ and $\g_{8,4}(\alpha)$.
	\end{enumerate}
	\begin{prop}
For all $\alpha\neq 0$, the Lie algebras $\g_{8,1}(\alpha)$ and $\g_{8,2}$ are i-isomorphic.
\end{prop}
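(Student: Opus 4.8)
The plan is to reduce the non‑isomorphism to a statement about the derivations that present the two algebras as double extensions, and then to extract an invariant of those derivations that is preserved under the equivalence of Lemma~\ref{lem1}. Recall that $\g_{8,1}(\alpha)$ was obtained in Subsection~\ref{subsec3.2} as the double extension of $\hk:=\g_5\oplusp\FF$ by the (nilpotent, skew‑symmetric) derivation $D$ with $\beta=0$ in case (2) there, for which $D(X_2)=X_1$, $D(X_1)=\alpha Y$, $D(Y)=-\alpha Z_1$, $D(Z_1)=-Z_2$, $D(T)=D(Z_2)=0$. First I would show that $\g_{8,2}$ is \emph{also} isometrically a double extension of $\hk=\g_5\oplusp\FF$: its center $\Zs(\g_{8,2})$ is $2$‑dimensional and totally isotropic, and for a suitable isotropic $f_0\in\Zs(\g_{8,2})$ the reduced algebra $f_0^\bot/\FF f_0$ is isomorphic to $\g_5\oplusp\FF$ (a direct computation with the brackets of $\g_{8,2}$; note that at the two "non‑generic" isotropic central lines one gets instead $\g_{6,1}$, which is $2$-step nilpotent). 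Denote by $D'$ the corresponding skew‑symmetric derivation $\ad(e_0)|$ of $\g_5\oplusp\FF$.

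Next, assume for contradiction that $\phi\colon\g_{8,1}(\alpha)\to\g_{8,2}$ is an isomorphism. Since both reductions of $\g_{8,1}(\alpha)$ at isotropic central lines are isomorphic to $\g_5\oplusp\FF$ while $\g_{8,2}$ reduces to the $2$-step algebra $\g_{6,1}$ at its non‑generic lines, $\phi(\FF f)$ must be one of the lines at which $\g_{8,2}$ reduces to $\g_5\oplusp\FF$; taking that line as $f_0$, Lemma~\ref{lem1} provides an automorphism $P$ of $\g_5\oplusp\FF$, a nonzero $\lambda\in\FF$ and $X\in\g_5\oplusp\FF$ with $P^{-1}DP=\lambda D'+\ad(X)$. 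Restricting this identity to the center $\Zs(\g_5\oplusp\FF)$ (which is preserved by $P$ and by $D,D'$, and on which $\ad(X)$ vanishes) gives that $D|_{\Zs}$ and $D'|_{\Zs}$ are conjugate up to a nonzero scalar, via an automorphism of $\Zs(\g_5\oplusp\FF)$ which moreover preserves both the canonical flag $\Zs(\g_5)=[[\hk,\hk],\hk]\subset\Zs(\hk)$ and the canonical orthogonal splitting $\Zs(\hk)=\Zs(\g_5)\oplusp\FF Y$ coming from the (unique) decomposition $\hk=\g_5\oplusp\FF$.

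The contradiction is then to be reached by computing these restricted derivations explicitly and comparing them as data on the pair $(\Zs(\hk),\ B|_{\Zs},\ \text{flag},\ \text{splitting})$ modulo conjugation and scaling: for $\g_{8,1}(\alpha)$ one has $Y\xmapsto{D}-\alpha Z_1\xmapsto{D}\alpha Z_2\xmapsto{D}0$, and for $\g_{8,2}$ one reads off the analogous chain for $D'$ from the reduction in the first step; the discrepancy between the two — which must be something genuinely finer than the coarse invariants, since the nilpotency class ($5$), the dimensions $\dim[\g,\g]=6$, $\dim[[\g,\g],[\g,\g]]=2$, the whole lower and upper central series, and the Jordan type of $\ad$ on $\Zs(\g)$ all coincide for $\g_{8,1}(\alpha)$ and $\g_{8,2}$ — yields the incompatibility. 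I expect this last step to be the main obstacle: one must identify precisely which invariant of the nilpotent derivation $D$ on $\g_5\oplusp\FF$ (equivalently, which invariant surviving the Lemma~\ref{lem1} equivalence "$P^{-1}DP=\lambda D'+\ad(X)$", including the fact that inner derivations can alter the behaviour of $D$ on $[\hk,\hk]$ but not on $\Zs(\hk)$) actually separates the two algebras, and, if restriction to the center alone does not suffice, to replace it with a suitable refinement or a direct matrix analysis of the $\OO(\qk)$‑conjugacy class of the commuting pair of operators induced on the $4$‑dimensional quotient $[\g,\g]/\Zs(\g)$.
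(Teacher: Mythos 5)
Your proposal is not yet a proof: the decisive step is explicitly left open (``one must identify precisely which invariant \dots actually separates the two algebras''), and everything preceding it is scaffolding around that missing step. Moreover, the one concrete assertion in your second paragraph that would finish the argument all by itself --- that every isotropic central line of $\g_{8,1}(\alpha)$ reduces to $\g_5\oplusp\FF$ --- is stated without verification, and the word ``both'' is a warning sign: $\Zs(\g_{8,1}(\alpha))=\spa\{f,Z_2\}$ is $2$-dimensional and totally isotropic, so there is a whole $\ps$ of isotropic central lines $\FF(af+bZ_2)$ to test, not two. Had you verified that assertion you would need none of the Lemma~\ref{lem1} machinery: an isomorphism would carry the line $\FF f'\subset\g_{8,2}$, whose reduction is the $2$-step nilpotent algebra $\g_{6,1}$, to an isotropic central line of $\g_{8,1}(\alpha)$ with $2$-step nilpotent reduction, and you would be done. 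This is in fact exactly the paper's own proof: it sets $Z=af+bZ_2$, $X=aX_2-be$, and claims that $[[X_1,X],X_1]$ and $[[X_1,X],X]$ never both lie in $\FF Z$.

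The real problem is that this computation does not go through over an algebraically closed field, so the route you propose (and the paper's argument) hits a genuine obstruction rather than a merely technical one. One finds $[X_1,X]=aT+b\alpha Y$, $[[X_1,X],X_1]=aZ_2-b\alpha^2 f$ and $[[X_1,X],X]=-(a^2+\alpha^2b^2)Z_1$, and both conditions reduce to $a^2+\alpha^2b^2=0$, which has nonzero solutions. Take $a=\sqrt{-1}\,\alpha$, $b=1$: at $Z=\sqrt{-1}\,\alpha f+Z_2$ the reduction $Z^\perp/\FF Z$ is a non-abelian $2$-step nilpotent $6$-dimensional quadratic Lie algebra, hence $\g_{6,1}$; explicitly, with $W_\pm=\pm\sqrt{-1}\,T+Y$, the elements $X_1,\ X,\ \tfrac{1}{2\alpha}W_-$ together with $Z_1,\ \tfrac{1}{2\sqrt{-1}\,\alpha}Z_2-\tfrac12 f,\ \alpha W_+$ form a canonical basis of the reduction realizing the brackets of $\g_{6,1}$, and the derivation induced by the dual vector $E=\tfrac{1}{2\sqrt{-1}\,\alpha}e+\tfrac12X_2$ acts on $\spa\{X_1,X,W_-\}$ as a single nilpotent Jordan block ($X\mapsto X_1\mapsto\tfrac{1}{2\sqrt{-1}}W_-\mapsto 0$) with vanishing lower-left block. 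By the analysis of Subsection~\ref{subsec3.3} this is, up to conjugation, scaling and inner derivations, precisely the derivation defining $\g_{8,2}$. So your own observation that the lower and upper central series, $\dim[\g,\g]$, $\dim[[\g,\g],[\g,\g]]$ and the Jordan data all coincide for the two algebras is not an accident, and before searching for a finer separating invariant you should re-examine whether the statement is true as written; at minimum, any correct proof must dispose of the lines with $a^2+\alpha^2b^2=0$, which neither your proposal nor the paper does.
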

\begin{proof}
%We assume there is some $\alpha\neq 0$ such that $\g_{8,1}(\alpha)$ and $\g_{8,2}$ are isomorphic. It means there exists a nonzero isotropic central $Z\in\g_{8,1}(\alpha)$ such that $Z^\bot/\FF Z$ is 2-step nilpotent.
Let $Z=uZ_2+vf$ be a nonzero central element in $\g_{8,1}(\alpha)$. It
is obvious that $Z$ isotropic. One has
$Z^\bot=\spa\{X_1,T,Z_1,Z_2,Y,f,X\}$ with $X:=vX_2-ue$. Denote by $L=[X,X_1]=[vX_2-ue,X_1]=-vT-u\alpha Y$ then $[L,X_1]=-vZ_2+u\alpha^2f$ and $[L,X]=(v^2+u^2\alpha^2)Z_1$. So if we choose $u\neq 0$ and $v=\iota \alpha u$ then $Z^\bot/\FF Z$ is identified as $\g_{6,1}$. Therefore, for every $\alpha\neq 0$, the Lie algebras $\g_{8,1}(\alpha)$ and $\g_{8,2}$ are i-isomorphic.
%and therefore
%$Z^\bot/\FF Z$ is 2-step nilpotent if and only if $[[X_1,X],X_1]$ and
%$[[X_1,X],X]$ are in $\FF Z$. We can check that it can not happen for
%every $\alpha\neq 0$ and this is a contradiction. So
%$\g_{8,1}(\alpha)$ and $\g_{8,2}$ are not isomorphic.
\end{proof}
\subsection{Double extensions of $\g_{6,2}(\lambda)$}\label{subsec3.4}\hfill

Let us consider $\hk=\g_{6,2}(\lambda)$ with a basis
$\{X,X_1,X_2,Z_1,Z_2,Z\}$ such that the nonzero bilinear forms
$B(X,Z)=B(X_1,Z_1)=B(X_2,Z_2)=1$ and the nonzero Lie brackets
$[X,X_1]=X_1$, $[X,X_2]=\lambda X_2$, $[X,Z_1]=-Z_1$,
$[X,Z_2]=-\lambda Z_2$, $[X_1,Z_1]=Z$, $[X_2,Z_2]=\lambda Z$ where
$\lambda\in\FF$, $0<\left|\lambda\right|\leq 1$. By a straightforward
computation, if $D$ is a skew-symmetric derivation of $\hk$ then the
matrix of $D$ with respect to the given basis is:
$$ D=\begin{pmatrix} 0 & 0 & 0 & 0 & 0 & 0  \\ y & a & c & 0 & 0 & 0 \\ z & b & d & 0 & 0 & 0 \\ t & 0 & 0 & -a & -b & 0\\ h & 0 & 0 & -c & -d & 0 \\ 0 & -t & -h & -y & -z & 0\end{pmatrix}.$$

Now we consider two following cases: $\lambda=1$ and $\lambda\neq\pm 1$ (note that the quadratic Lie algebras corresponding to the cases of $\lambda=\pm 1$ are i-isomorphic).
\begin{enumerate}
	\item If $\lambda= 1$ then one proves that any skew-symmetric outer derivation has the matrix $D=\begin{pmatrix} 0 & 0 & 0 & 0 & 0 & 0  \\ 0 & a & c & 0 & 0 & 0 \\ 0 & b & d & 0 & 0 & 0 \\ 0 & 0 & 0 & -a & -b & 0\\ 0 & 0 & 0 & -c & -d & 0 \\ 0 & 0 & 0 & 0 & 0 & 0\end{pmatrix}$. Denote by $A=\begin{pmatrix} a & c\\ b & d \end{pmatrix}$. If $P$ is a $2\times 2$ matrix such that its determinant is 1. Set $\begin{pmatrix} X_1' & X_2' \end{pmatrix}=\begin{pmatrix} X_1 & X_2 \end{pmatrix}P$
and $\begin{pmatrix} Z_1' & Z_2' \end{pmatrix}=\begin{pmatrix} Z_1 & Z_2 \end{pmatrix}(P^t)^{-1}$. It is easy to check that $B(X_i',Z_j')=\delta_{ij}$, $[X,X_1']=X_1'$, $[X,X_2']=X_2'$, $[X,Z_1']=-Z_1'$, $[X,Z_2']=-Z_2'$ and $[X_1',Z_1']=[X_2',Z_2']=Z$. In this case, the matrix of $D$ in the basis $\{X,X_i',Z_i',Z\}$ has form:
$$D=\begin{pmatrix} 0 & 0 & 0 & 0 \\ 0& P^{-1}AP & 0 & 0 \\ 0 & 0 &  -P^tA^t(P^t)^{-1} & 0 \\ 0 & 0 & 0 & 0 \end{pmatrix}$$

Since every matrix is similar to a matrix in Jordan form so we consider following matrices $A$ (up to nonzero multiples): $\begin{pmatrix} 0 & 0  \\ 0 & 0 \end{pmatrix}$, $\begin{pmatrix} 1 & 0  \\ 0 & \beta\end{pmatrix}$, %$\begin{pmatrix} 0 & 0  \\ 0 & 1 \end{pmatrix}$, $\begin{pmatrix} 1 & 0  \\ 0 & 1 \end{pmatrix}$, $\begin{pmatrix} 1 & 0  \\ 0 & -1 \end{pmatrix}$,
$\begin{pmatrix} 0 & 1  \\ 0 & 0 \end{pmatrix}$, $\begin{pmatrix} 1 & 1  \\ 0 & 1 \end{pmatrix}$ where $\beta\in\FF$. We can verify that two first cases correspond to $\g$ decomposable.

For the case of $A=\begin{pmatrix} 0 & 1  \\ 0 & 0 \end{pmatrix}$, $\g$ is exactly the Lie algebra $\g_{8,5}$ appeared in Subsection \ref{subsec3.3}. Indeed, $\g=\spa\{e,X_2,Z_1,f,Z_2,X_1\}\oplus \FF X\oplus\FF Z$ the double extension of $\g_{6,1}=\spa\{e,X_2,Z_1,f,Z_2,X_1\}$ by $$ D=\begin{pmatrix} 0 & 0 & 0 & 0 & 0 & 0  \\ 0 & 1 & 0 & 0 & 0 & 0 \\ 0 & 0 & -1 & 0 & 0 & 0 \\ 0 & 0 & 0 & 0 & 0 & 0\\ 0 & 0 & 0 & 0 & -1 & 0 \\ 0 & 0 & 0 & 0 & 0 & 1\end{pmatrix}.$$ 

For the remaining case, we will denote it by $\g_{8,6}$ and prove that it is indecomposable. Indeed, assume $\g_{8,6}$ is decomposable then it must be decomposed by $\g_4\oplusp\g_4$. By a way similar to what we have done for the Lie algebra $\g_{8,5}$ above, we can conclude that $\g_{8,6}$ indecomposable. Another proof by applying Proposition \ref{prop2} can be found in Remark \ref{rem3.3}. % and $\g_{8,7}(\beta)$.
	\item $\lambda\neq\pm 1$ then it is easy to check that $b=c=0$. %For skew-symmetric outer derivations we consider:
	%$$ D=\begin{pmatrix} 0 & 0 & 0 & 0 & 0 & 0  \\ 0 & a & 0 & 0 & 0 & 0 \\ 0 & 0 & d & 0 & 0 & 0 \\ 0 & 0 & 0 & -a & 0 & 0\\ 0 & 0 & 0 & 0 & -d & 0 \\ 0 & 0 & 0 & 0 & 0 & 0\end{pmatrix}.$$
	If $a=d=0$ then $\g$ is decomposable. If $a=0$ and $d\neq 0$ (it is similarly done for $a\neq 0$ and $d=0$) then we can choose $d=1$. 
	%$$ D=\begin{pmatrix} 0 & 0 & 0 & 0 & 0 & 0  \\ 0 & 0 & 0 & 0 & 0 & 0 \\ 0 & 0 & 1 & 0 & 0 & 0 \\ 0 & 0 & 0 & 0 & 0 & 0\\ 0 & 0 & 0 & 0 & -1 & 0 \\ 0 & 0 & 0 & 0 & 0 & 0\end{pmatrix}.$$
	By changing of basis $X:=X-\lambda e$, $f:=f+\lambda Z$ and keep the other we get $\g=\g_4\oplusp\g_4$ decomposable.
	
	If $a$ and $d\neq 0$, dividing by $a$ then  $ D=\begin{pmatrix} 0 & 0 & 0 & 0 & 0 & 0  \\ 0 & 1 & 0 & 0 & 0 & 0 \\ 0 & 0 & \alpha & 0 & 0 & 0 \\ 0 & 0 & 0 & -1 & 0 & 0\\ 0 & 0 & 0 & 0 & -\alpha & 0 \\ 0 & 0 & 0 & 0 & 0 & 0\end{pmatrix}$ with $\alpha \neq 0$. If $\alpha=\lambda$ then $\g$ is decomposable. If $\alpha\neq \lambda$ then by changing the basis $e:=\frac{\lambda}{\lambda-\alpha}\left(e-\frac{\alpha}{\lambda}X\right)$, $Z:=Z+\frac{\alpha}{\lambda}f$, $f:=\frac{\lambda-\alpha}{\lambda}f$ and keep the other we get $\alpha=0$ where $\g$ is decomposable.
\end{enumerate}
\begin{prop}
The Lie algebras $\g_{8,5}$ and $\g_{8,6}$ are i-isomorphic.
\end{prop}
\begin{proof}
Firstly, let us describe $\g_{8,6}=\spa\{e,X,X_2,X_1,Z_1,Z_2,Z,f\}$ by the Lie brackets: $[e,X_1]=X_1$, $[e,X_2]=X_1 + X_2$, $[e,Z_1] = -Z_1-Z_2$, $[e,Z_2] = -Z_2$, $[X,X_1] = X_1$, $[X,X_2] = X_2$, $[X,Z_1] = -Z_1$, $[X,Z_2] = -Z_2$, $[X_1,Z_1] = [X_2,Z_2] = Z+f$ and $[X_2,Z_1]=f$. Denote by $K=Z+f$ and $s=e-X$ then $K^\bot/\FF K$ is the the 6-dimensional Lie algebra with basis $\{s,X_2,X_1,Z_1,Z_2,L\}$ where $L$ is the image of $Z$ modulo $K$ and nonzero brackets $[s,X_2]=X_1$, $[s,Z_1] = -Z_2$, $[X_2,Z_1] = -L$. We recognize that is $\g_{6,1}$. Moreover, the adjoint action of $X$ on the basis $\{s,X_2,-Z_1\}$ is the matrix $\begin{pmatrix} 0 & 0 & 0 \\ 0 & 1 & 0 \\ 0 & 0 & -1 \end{pmatrix}$ so $\g_{8,6}$ is i-isomorphic to $\g_{8,5}$.
\end{proof}
\begin{rem}\label{rem3.3}\hfill

\begin{itemize}

\item The Lie algebras $\g_{8,5}$, $\g_{8,6}$ are
  double extensions of the quadratic vector space
  $\FF^4=\spa\{X_1,X_2,Z_1,Z_2\}$ by the pair $(C,D_1)$ and $(C,D_2)$ respectively where \[C=\ad (X)=\begin{pmatrix}
  1 & 0 & 0 & 0 \\ 0 & 1 & 0 & 0 \\ 0 & 0 & -1 & 0 \\ 0 & 0 & 0 &
  -1\end{pmatrix}\] is invertible and diagonalizable,
  $D_1=\begin{pmatrix} 0 & 1 & 0 & 0 \\ 0 & 0 & 0 & 0 \\ 0 & 0 & 0 & 0
  \\ 0 & 0 & -1 & 0\end{pmatrix}$ and $D_2=\begin{pmatrix} 1 & 1 & 0 & 0
  \\ 0 & 1 & 0 & 0 \\ 0 & 0 & -1 & 0 \\ 0 & 0 & -1 &
  -1\end{pmatrix}$ with respect to the basis $\{X_1,X_2,Z_1,Z_2\}$. %\ \text{and}\ D_3(\lambda)=\begin{pmatrix} 1 & 0 & 0
  %& 0 \\ 0 & \lambda & 0 & 0 \\ 0 & 0 & -1 & 0 \\ 0 & 0 & 0 &
  %-\lambda\end{pmatrix}.$$ 
	Note that $D_1=-C+D_2$ then by
  Proposition \ref{prop2}, the Lie algebras $\g_{8,5}$ and $\g_{8,6}$ is i-isomorphic. %However, $\g_{8,7}(\lambda)$
  %and $\g_{8,7}(\lambda')$ are isomorphic by choosing $P=\Id$, two
  %pairs $(\lambda_1,\lambda_2) = (1,0)$ and
  %$(\gamma_1,\gamma_2)=\left(\frac{\lambda-\lambda'}{\lambda-1},\frac{\lambda'-1}{\lambda-1}\right)$.
	\item Observe that for the Lie algebra $\g_4\oplusp\g_4$, by a suitable basis changing, we can write $\g_4\oplusp\g_4$ as a double extension of $\FF^4$ by the pair $(C,D)$ where $C=\begin{pmatrix}
  1 & 0 & 0 & 0 \\ 0 & 1 & 0 & 0 \\ 0 & 0 & -1 & 0 \\ 0 & 0 & 0 &
  -1\end{pmatrix}$ and $D=\begin{pmatrix}
  1 & 0 & 0 & 0 \\ 0 & 0 & 0 & 0 \\ 0 & 0 & -1 & 0 \\ 0 & 0 & 0 &
  0\end{pmatrix}$ with respect to a canonical basis. Both of them are diagonalizable so $\g_4\oplusp\g_4$ can not be isomorphic to $\g_{8,5}$. As a consequence, The quadratic Lie algebras $\g_{8,5}$ are indecomposable.
	\end{itemize}

\end{rem}
\subsection{Double extensions of $\g_{6,3}$}\label{subsec3.5}\hfill

Finally, let us consider $\hk=\g_{6,3}$. Choose a basis $\{X,X_1,X_2,Z_1,Z_2,Z\}$ satisfying $B(X,Z)=1$, $B(X_i,Z_j)=\delta_{ij}$, the others trivial and the Lie brackets: $[X,X_1]=X_1$, $[X,X_2]=X_1+X_2$, $[X,Z_1]=-Z_1-Z_2$, $[X,Z_2]=-Z_2$, $[X_1,Z_1]=[X_2,Z_1]=[X_2,Z_2]=Z$. By a straightforward computation, the matrix of a skew-symmetric derivation of $\hk$ with respect to the given basis is given by:
\[D=\begin{pmatrix} 0 & 0 & 0 & 0 & 0 & 0  \\ x & a & b & 0 & 0 & 0 \\ y & 0 & a & 0 & 0 & 0 \\ z & 0 & 0 & -a & 0 & 0\\ t & 0 & 0 & -b & -a & 0 \\ 0 & -z & -t & -x & -y & 0\end{pmatrix}.\]
Eliminating inner derivations, we choose $a$, $x$, $y$, $z$ and $t$ zero. %only consider $D=\begin{pmatrix} 0 & 0 & 0 & 0 & 0 & 0  \\ 0 & 0 & b & 0 & 0 & 0 \\ 0 & 0 & 0 & 0 & 0 & 0 \\ 0 & 0 & 0 & 0 & 0 & 0\\ 0 & 0 & 0 & -b & 0 & 0 \\ 0 & 0 & 0 & 0 & 0 & 0\end{pmatrix}$.
If $b=0$ then $\g$ is decomposable. If $b\neq 0$ then it is reduced to consider
\[D=\begin{pmatrix} 0 & 0 & 0 & 0 & 0 & 0  \\ 0 & 0 & 1 & 0 & 0 & 0 \\ 0 & 0 & 0 & 0 & 0 & 0 \\ 0 & 0 & 0 & 0 & 0 & 0\\ 0 & 0 & 0 & -1 & 0 & 0 \\ 0 & 0 & 0 & 0 & 0 & 0\end{pmatrix}.\]
In this case, $\g$ is the double extension of $\hk'=\g_{6,1}=\spa\{e,X_2,Z_1,Z+f,Z_2,X_1\}$ by the derivation
\[D=\ad(X-e)=\begin{pmatrix} 0 & 0 & 0 & 0 & 0 & 0  \\ 0 & 1 & 0 & 0 & 0 & 0 \\ 0 & 0 & -1 & 0 & 0 & 0 \\ 0 & 0 & 0 & 0 & 0 & 0\\ 0 & 0 & 0 & 0 & -1 & 0 \\ 0 & 0 & 0 & 0 & 0 & 1\end{pmatrix}.\]
This is the Lie algebra $\g_{8,5}$ has just been considered in Subsection \ref{subsec3.3}.
\begin{prop}
In summary, solvable quadratic Lie algebras of dimension 8 are
separated in the following non i-isomorphic families:

\begin{enumerate}
	\item $\g_7\oplusp\FF$ with $\g_7$ a solvable quadratic Lie algebras of dimension 7.
	\item $\g_4\oplusp\g_4$.
	\item indecomposable 1-step double extensions by linear maps $A\in\ok(6)$ with totally isotropic kernel up to conjugation and up to nonzero multiples. In this case, the classification has been given in \cite{DPU12}.
	%\item the double extension $\g_{8,1}(\alpha)$ of $\g_5\oplusp\FF$ by $$D=\begin{pmatrix} 0 & 1 & 0 & 0 & 0 & 0  \\ 0 & 0 & 0 & 0 & 0 & 0 \\ 0 & 0 & 0 & 0 & 0 & 0 \\ 0 & 0 & 0 & 0 & 0 & -\alpha \\ 0 & 0 & 0 & -1 & 0 & 0 \\ \alpha & 0 & 0 & 0 & 0 & 0\end{pmatrix},\ \alpha\neq 0$$ in a basis $\{X_1,X_2,T,Z_1,Z_2, Y\}$ of $\g_5\oplusp\FF$ where $\{X_1,X_2,T,Z_1,Z_2\}$ is a canonical basis of $\g_5$, $B(Y,Y)=1$ and $[X_1,X_2]=T$, $[X_1,T]=-Z_2$, $[X_2,T]=Z_1$.
	\item the double extensions $\g_{8,2}$, $\g_{8,3}$, $\g_{8,4}(\alpha)$ of $\g_{6,1}$ by $D=\begin{pmatrix} A & 0 \\ 0 & -A^t\end{pmatrix}$ where $A=\begin{pmatrix} 0 & 1 & 0 \\ 0 & 0 & 1 \\ 0 & 0 & 0 \end{pmatrix}
$, $\begin{pmatrix} 1 & 1 & 0 \\ 0 & 1 & 0 \\ 0 & 0 & -2 \end{pmatrix}$, $\begin{pmatrix} 1 & 0 & 0 \\ 0 & \alpha & 0 \\ 0 & 0 & -1-\alpha \end{pmatrix}$, $\alpha\neq 0$ and $-1$, respectively
in a canonical basis $\{X_i,Z_i\}$, $1\leq i \leq 3$, of $\g_{6,1}$ such that the nonzero brackets $[X_1,X_2]=Z_3$, $[X_2,X_3]=Z_1$, $[X_3,X_1]=Z_2$. 
	\item the double extension $\g_{8,5}$ of $\g_{6,2} (1)$ by  $$D=\begin{pmatrix} 0 & 0 & 0 & 0 \\ 0& A & 0 & 0 \\ 0 & 0 &  -A^t & 0\\ 0 & 0 & 0 & 0\end{pmatrix}$$
where $A= \begin{pmatrix} 0 & 1  \\ 0 & 0 \end{pmatrix}$%, $\begin{pmatrix} 1 & 1  \\ 0 & 1 \end{pmatrix}$%, $\begin{pmatrix} 1 & 0  \\ 0 & \lambda\end{pmatrix}$, $\lambda\neq 0,\pm 1$,
 respectively in a basis $\{X,X_i,Z_i,Z\}$, $1\leq i \leq 2$, %$\{X,X_1,X_2,Z_1,Z_2,Z\}$ 
of $\g_{6,2} (1)$ such that the non trivial bilinear forms $B(X,Z)=1$, $B(X_i,Z_j)=\delta_{ij}$, %the nonzero bilinear forms $B(X,Z)=B(X_1,Z_1)=B(X_2,Z_2)=1$ and 
the nonzero Lie brackets $[X,X_1]=X_1$, $[X,X_2]=X_2$, $[X,Z_1]=-Z_1$, $[X,Z_2]=-Z_2$ and $[X_1,Z_1]=[X_2,Z_2]=Z$.
		
\end{enumerate}
\end{prop}

\begin{rem}\label{rem3.4}\hfill

\begin{itemize}

\item The Lie algebras $\g_{8,2}$ is nilpotent
  while $\g_{8,3}$, $\g_{8,4}(\alpha)$ and $\g_{8,5}$ are solvable nonnilpotent.

\item The Lie algebras $\g_{8,3}$ and $\g_{8,4}(\alpha)$ are double
  extensions of $\g_{6,1}$ by invertible skew-symmetric derivations
  and the derivation corresponding to $\g_{8,4}(\alpha)$ is
  diagonalizable on the center of $\g_{6,1}$. Hence, applying
  Corollary \ref{cor1}, $\g_{8,3}$ and $\g_{8,4}(\alpha)$ are not
  i-isomorphic. Two Lie algebras $\g_{8,4}(\alpha)$ and
  $\g_{8,4}(\alpha')$ are i-isomorphic if and only if \linebreak
  $(1,\alpha,$$ -1-\alpha) =(1,\alpha',-1-\alpha')$ up to nonzero
  multiples and up to coordinates permutation.

	\end{itemize}

\end{rem}

\begin{rem}\hfill
\begin{enumerate}
	\item In the classification of indecomposable solvable quadratic Lie algebras of dimension 8, we can begin with a reduced solvable quadratic Lie algebra
$\g$ of dimension 8% having a totally isotropic center
. Since
$[\g,\g]/\Zs(\g)$ is nilpotent then we have three situations: $[\g,\g]/\Zs(\g)$ is Abelian, isomorphic to $\g_5\oplusp\FF$ or isomorphic to $\g_{6,1}$. For the case $[\g,\g]/\Zs(\g)$ Abelian, the reader can find a classification in \cite{KO04} and \cite{KO06} by cohomology. In the
case $[\g,\g]/\Zs(\g)$ isomorphic to $\g_5\oplusp\FF$, it is obvious that
$\dim(\Zs(\g))=1$, $\g$ is a double extension of $\g_5\oplusp\FF$ by a skew-symmetric derivation which is invertible on the center of $\g_5\oplusp\FF$. Following the calculation of skew-symmetric derivations in Subsection \ref{subsec3.2}, this can not happen. For the last, $\dim(\Zs(\g))=1$, $\g$ is a double extension of $\g_{6,1}$ by a skew-symmetric derivation which is invertible on the center of $\g_{6,1}$. It has been indicated in Subsection \ref{subsec3.3}.
	\item Combining with a classification result in \cite{BE14}, we obtain a classification of not necessarily indecomposable, non solvable, quadratic Lie algebras of dimensions $\leq 11$
\end{enumerate}
\end{rem}
\bibliographystyle{amsxport}
%% % \bib, bibdiv, biblist are defined by the amsrefs package.

\begin{bibdiv}
\begin{biblist}

\bib{BE14}{article}{
	 author={Benayadi, S.},
   author={Elduque, A.}
   title={Classification of quadratic Lie algebras of low dimension},
   journal={arXiv:1404.5174v1 [math.RA]},
   
} 
\bib{BK03}{article}{
	 author={Baum, H.},
   author={Kath, I.}
   title={Doubly extended Lie groups -- curvature, holonomy and parallel spinors},
   journal={Differential Geom. Appl.},
   volume={19},
   number={3},
   date={2003},
   pages={253 -- 280},

} 

\bib{CS08}{article}{
	 author={Campoamor-Stursberg, R.},
   title={Quasi-classical Lie algebras and their contractions},
   journal={Int. J. Theor. Phys.},
   volume={47},
   number={2},
   date={2008},
   pages={583 -- 598},

} 
\bib{DPU12}{article}{
   author={Duong, M.T.},
   author={Pinczon, G.},
   author={Ushirobira, R.},
   title={A new invariant of quadratic Lie algebras},
   journal={Algebras and Representation Theory},
   volume={15},
	date={2012}
	 pages={1163--1203},
}

\bib{FS87}{article}{
   author={Favre, G.},
   author={Santharoubane, L.J.},
   title={Symmetric, invariant, non-degenerate bilinear form on a Lie algebra},
   journal={J. Algebra},
   fjournal={Journal of Algebra},
   volume={105},
   date={1987},
   pages={451--464},
%   issn={0021-8693},
%   review={\MR{1060845 (91j:17014)}},

} 

%\bib{Kac85}{book}{
 %  author={Kac, V.},
 %  title={Infinite-dimensional Lie algebras},
 %  publisher={Cambridge University Press},
 %  place={New York},
 %  date={1985},
 %  pages={xvii + 280 pp}
%   issn={0021-8693},
%   review={\MR{1060845 (91j:17014)}},
%}
%\bib{Keh94}{article}{
%author={Kehagias, A, A},
%title={All WZW Mooels in $D\leq5$},
%journal={arXiv:hep-th/9406136v1}
%
%}
\bib{Kat07}{article}{
   author={Kath, I.},
	 title={Nilpotent metric Lie algebras of small dimension},
   journal={J. Lie Theory},
   volume={17},
   number={1},
   date={2007},
   pages={41 -- 61},
}
\bib{KO04}{article}{
   author={Kath, I.},
	 author={Olbrich, M.},
	 title={Metric Lie algebras with maximal isotropic centre},
   journal={Math. Z.},
   volume={246},
   %number={1},
   date={2004},
   pages={23--53},
}
\bib{KO06}{article}{
   author={Kath, I.},
	 author={Olbrich, M.},
	 title={Metric Lie algebras and quadratic extensions},
   journal={Transform. Groups},
   volume={11},
   number={1},
   date={2006},
   pages={87 -- 131},
}
\bib{MR85}{article}{ 
  author={Medina, A.}, 
  author={Revoy, P.},
  title={Alg\`ebres de Lie et produit scalaire invariant}, 
  journal={Ann. Sci. \'Ec Norm. Sup\'er.}, 
  fjournal={Annales Scientifiques de l'\'Ecole Normale Sup\'erieure}, 
  volume={4}, 
  date={1985},
  pages={553--561},
%   issn={0021-8693},
%   review={\MR{1060845 (91j:17014)}},

}
\bib{Med85}{article}{ 
  author={Medina, A.},
	title={Groupes de Lie munis de m\'etriques bi-invariantes},
	journal={T\^ohoku Math. J.}, 
	volume={37},
	  date={1985},
  pages={405--421},
	}
	%\bib{Pea62}{article}{ 
 % author={Pearcy, C.},
	%title={A complete set of unitary invariants for 3x3 complex matrices},
	%journal={Trans. Amer. Math. Soc.}, 
	%volume={104},
	%  date={1962},
 % pages={425--429},
	%}
	
\end{biblist}
\end{bibdiv}

\end{document}